\documentclass[11pt,a4paper,twoside]{amsart}

\usepackage{amsmath,amsfonts,amsthm,amsopn,color,amssymb,enumitem,cite,soul}
\usepackage{palatino}
\usepackage{graphicx}
\usepackage[normalem]{ulem}
\usepackage[colorlinks=true,urlcolor=blue,citecolor=red,linkcolor=blue,linktocpage,pdfpagelabels,bookmarksnumbered,bookmarksopen]{hyperref}
\hypersetup{urlcolor=blue, citecolor=red, linkcolor=blue}
\usepackage[left=2.61cm,right=2.61cm,top=2.72cm,bottom=2.72cm]{geometry}


\usepackage[hyperpageref]{backref}

\usepackage[colorinlistoftodos]{todonotes}
\makeatletter
\providecommand\@dotsep{5}
\def\listtodoname{List of Todos}
\def\listoftodos{\@starttoc{tdo}\listtodoname}
\makeatother

\newcommand{\eps}{\varepsilon}

\newcommand{\R}{\mathbb{R}}

\newcommand{\RN}{{\mathbb{R}^N}}

\newcommand{\RT}{{\mathbb{R}^3}}

\newcommand{\B}{{\mathcal B}}

\DeclareMathOperator{\supp}{supp}
 
\renewcommand{\le}{\leslant}
\renewcommand{\ge}{\geslant}
\renewcommand{\a }{\alpha }

\newcommand{\g }{\gamma }

\newcommand{\n }{\nabla }

\newcommand{\Hr}{H^1_r(\RT)}
\newcommand{\HT}{H^1(\RT)}

\newcommand{\N}{\mathbb{N}}

\renewcommand{\o}{\omega}

\newcommand{\D }{{\mathcal D}^{1,2}(\RT)}
\newcommand{\Dr }{{\mathcal D}^{1,2}_r(\RT)}

\newcommand{\irt }{\int_{\RT}}

\def\bbm[#1]{\mbox{\boldmath $#1$}}
\newcommand{\beq }{\begin{equation}}
\newcommand{\eeq }{\end{equation}}

\renewcommand{\le}{\leqslant}
\renewcommand{\ge}{\geqslant}
\newcommand{\dis}{\displaystyle}

\newtheorem{theorem}{Theorem}[section]
\newtheorem{lemma}[theorem]{Lemma}

\newtheorem{proposition}[theorem]{Proposition}
\newtheorem{remark}[theorem]{Remark}
\newtheorem{corollary}[theorem]{Corollary}

\title[K-G-M system: limit case]{Finite energy standing waves \\for the Klein-Gordon-Maxwell system: \\the limit case}

\author[A. Azzollini]{Antonio Azzollini}
\address{Dipartimento di Matematica, Informatica ed Economia, Universit\`a degli
Studi della Basilicata,
\newline\indent
Via dell'Ateneo Lucano 10, I-85100
Potenza, Italy}
\email{antonio.azzollini@unibas.it}

\subjclass[2010]{35J20, 35Q60}
\keywords{Klein-Gordon-Maxwell system, standing waves}

\begin{document}
	
		\begin{abstract}
		In this paper we consider the Klein-Gordon-Maxwell system in the electrostatic case, assuming the fall-off large-distance requirement on the gauge potential. We are interested in proving the existence of finite energy (and finite charge) standing waves, having the phase corresponding to the mass coefficient in the Klein-Gordon Lagrangian. 
	\end{abstract}
\maketitle

\section{introduction}
As it is well known, standing waves solving the nonlinear Klein-Gordon-Maxwell system in the electrostatic case can be obtained from the system 
	\begin{equation}    \label{electrostatic}
	\left\{
	\begin{array}{ll}
	-\Delta u + [(m^2-\o^2)+e(2\o-e\phi)\phi] u-u^{p-1}=0 & \hbox{in }\RT,
	\\
	-\Delta \phi=e(\o-e\phi) u^2 & \hbox{in }\RT,\\
	u>0,\;\phi>0&\hbox{in }\RT
	\end{array}
	\right.
	\end{equation}
for $m, e, \o >0$ and $p>1$ (we refer to \cite{BF} for the derivation of the system). The interest in these equations rests on the gauge theory from which they come.\\
A couple $(u,\phi)$ solving \eqref{electrostatic} originates on one hand a matter field having the form of a standing wave
\begin{equation}\label{eq:matter}
\psi(x,t)= u(x)e^{-i\o t},\; u>0
\end{equation}
on the other the electromagnetic field 
	\begin{equation}\label{eq:elecmag}
		({\bf E}(x,t),{\bf H}(x,t))=(-\n \phi(x), {\bf 0})
	\end{equation}
interacting and influencing each other (see \cite{BF2}).\\ 
The physical relevance of this model is strengthened by the property of {\it localization} possessed by fields as in \eqref{eq:matter}.
Indeed, by the invariance of the original Lagrangian with respect to the Poincar\'e group of tranformations, we are allowed to put a standing wave in motion by means of a Lorentz boost, obtaining a solitary wave behaving like a relativistic particle.
In this sense, the system provides a relativistic consistent model for the description of the interaction between a particle embedded in the electromagnetic field generated by itself. \\
We point out that, as it is showed in \cite{BF2}, by the gauge invariance of the original Lagrangian with respect to transformations of the type
\begin{align*}
u(x)e^{-i\o t}&\mapsto u(x)e^{-i(\o t-\chi(t))}\\
\phi(x)&\mapsto \phi(x)-\frac{\partial}{\partial t}\chi(t)
\end{align*} 
where $\chi\in C^{\infty}(\R)$, starting from a solution $(u,\phi)$ of \eqref{electrostatic} and considering tranformations of the type $\chi(t)=ct$ for $c\in\R$, we can obtain standing waves $\psi(x,t)=u(x)e^{-i\tilde \o t}$ for an arbitray $\tilde\o\in\R$, remaining the electromagnetic field unvaried. \\
This gauge freedom is avoided by requiring, for example, a further condition on the behaviour of the gauge potential $\phi$ at infinity. In particular, in our project we are interested in potentials $\phi$ satisfying the so called large-distance fall-off requirement
	\begin{equation}\label{FO}\tag{{$\mathcal FO$}}
		\phi(x)\to 0\hbox{ as } |x|\to +\infty,
	\end{equation} 
in view to show that $\phi$ actually behaves like a Coulomb potential at infinity (see Remark \ref{re:cou}).

The first existence and multiplicity results concerning problem \eqref{electrostatic} were obtained in \cite{BF}. These results were later improved in \cite{DM} and \cite{APP}, while nonexistence theorems were proved in \cite{DM2}. The Klein-Gordon-Maxwell coupling was also considered in a bounded domain with various boundary conditions for instance in \cite{dps,dps1}. \\

In all the quoted papers, the relation between the mass coefficient $m$ and the wave phase $\omega$ plays an important role to establish the existence of solutions.\\
In particular, putting together the  results contained in \cite{APP,BF,DM}, we have the following
	\begin{theorem}\label{th:known}
		Let $p\in (2, 6)$ and assume that $0 < \omega < m g(p)$ where
	\begin{equation*}
		g(p) =
		\left\{
		\begin{array}{ll}
		\sqrt{(p-2)(4-p)} & \hbox{if }2<p<3,
		\\
		1 & \hbox{if }3\le p <6.
		\end{array}
		\right.
	\end{equation*}
Then \eqref{electrostatic} admits a nontrivial radial solution.
	\end{theorem}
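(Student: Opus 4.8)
The plan is to follow the by-now classical reduction of Benci and Fortunato \cite{BF}, working throughout in the radial space $\Hr$ in order to recover compactness. First I would observe that for each fixed $u\in\Hr$ the second equation of \eqref{electrostatic}, rewritten as $-\Delta\phi+e^2u^2\phi=e\o u^2$, is uniquely solvable: the bilinear form $(\phi,\psi)\mapsto\irt(\n\phi\cdot\n\psi+e^2u^2\phi\psi)$ is continuous and coercive on ${\mathcal D}^{1,2}(\RT)$, so Lax--Milgram produces a unique $\phi_u$ obeying the fall-off requirement \eqref{FO}, and a two-sided maximum-principle argument (testing with $\phi_u^-$ and with $(\phi_u-\o/e)^+$) yields $0\le\phi_u\le\o/e$. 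The map $u\mapsto\phi_u$ is of class $C^1$, and inserting $\phi_u$ into the action gives the reduced one-variable functional
\[
J(u)=\frac12\irt\big(|\n u|^2+(m^2-\o^2)u^2\big)+\frac{e\o}{2}\irt\phi_u u^2-\frac1p\irt|u|^p,
\]
whose critical points are exactly the $u$-components of solutions $(u,\phi_u)$ of \eqref{electrostatic}.

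Next I would verify that $J$ has the mountain-pass geometry on $\Hr$. Since $g(p)\le1$, the hypothesis $\o<mg(p)$ forces $\o<m$, so $\|u\|_*^2:=\irt(|\n u|^2+(m^2-\o^2)u^2)$ is a norm equivalent to that of $\Hr$. Because $\phi_u\ge0$ the coupling term is nonnegative, whence $J(u)\ge\frac12\|u\|_*^2-\frac1p\|u\|_p^p$, which by the Sobolev embedding stays above a positive constant on a small sphere $\|u\|_*=\rho$. Fixing a positive $u_0$, the bound $\phi_{tu_0}\le\o/e$ makes the coupling term grow at most quadratically in $t$, so it is dominated by $-t^p\irt u_0^p/p$ and $J(tu_0)\to-\infty$; a mountain-pass level $c>0$ is thereby well defined.

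The heart of the matter, and the step I expect to be the main obstacle, is the compactness needed to realise $c$ as a critical value. For a Palais--Smale sequence $(u_n)$ at level $c$ I would first establish boundedness in $\Hr$. Writing $A_n=\|u_n\|_*^2$, $B_n=e\o\irt\phi_{u_n}u_n^2$, $C_n=e^2\irt\phi_{u_n}^2u_n^2$ and using the identity $\irt|\n\phi_{u_n}|^2=B_n-C_n\ge0$, one is led to
\[
pJ(u_n)-J'(u_n)[u_n]=\frac{p-2}{2}A_n+\frac{p-4}{2}B_n+C_n,
\]
in which the sign-indefinite middle term must be absorbed against the coercive part. It is precisely this control that dictates the admissible range of $\o$: for $p\ge4$ the last two terms are nonnegative and the crude bounds $0\le C_n\le B_n\le\o^2\|u_n\|_2^2$ already give boundedness whenever $\o<m$, while for $2<p<4$ the same bounds only yield $\o<m\sqrt{(p-2)/2}$. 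The sharp thresholds---$\o<m$ for all $3\le p<6$ and the smaller $\o<m\sqrt{(p-2)(4-p)}$ for $2<p<3$, where the nonlinearity is too weak to help---are obtained by the finer treatment of the coupling term carried out in \cite{DM,APP}. Once $(u_n)$ is bounded, the compact embedding $\Hr\hookrightarrow L^p(\RT)$ for $2<p<6$ lets me extract $u_n\weakto u$, strongly in $L^p$; the continuity of $u\mapsto\phi_u$ then transfers convergence to the coupling term, and a standard argument upgrades this to strong convergence in $\Hr$, so that $u$ is a nontrivial critical point of $J$.

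Finally I would recover the sign conditions. Replacing $u$ by $|u|$ alters neither $J$ nor $\phi_u$, so the mountain-pass solution may be taken nonnegative; the strong maximum principle applied to the first equation gives $u>0$, and applied to $-\Delta\phi_u+e^2u^2\phi_u=e\o u^2$, whose right-hand side is nonnegative and not identically zero, gives $\phi_u>0$. This produces the desired nontrivial radial solution of \eqref{electrostatic}.
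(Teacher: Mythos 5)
The paper itself offers no proof of Theorem \ref{th:known}: it is quoted as a compilation of results from \cite{BF,DM,APP}, so the only fair comparison is with the arguments of those references. Your outline reproduces their framework correctly --- Lax--Milgram reduction with $0\le e\phi_u\le \o$, the reduced functional on $\Hr$, mountain-pass geometry, radial compactness --- and your identity $pJ(u_n)-J'(u_n)[u_n]=\tfrac{p-2}{2}A_n+\tfrac{p-4}{2}B_n+C_n$ is exactly right. The substantive gap is that the part of your argument you actually carry out proves the theorem only for $p\in[4,6)$, and for $p\in(2,4)$ only under the non-sharp threshold $\o<m\sqrt{(p-2)/2}$; the genuine content of the statement in that range (in particular $g(p)=1$ on $[3,4)$ and $g(p)=\sqrt{(p-2)(4-p)}$ on $(2,3)$) is precisely the step you defer to \cite{DM,APP}. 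Moreover, the mechanism you gesture at for reaching the sharp constant --- a ``finer treatment of the coupling term'' in the Palais--Smale boundedness estimate --- is not how those thresholds are obtained: the improvement in \cite{APP} comes from combining the energy level with the Nehari \emph{and Pohozaev} identities (the very computation this paper replays in Proposition \ref{pr:bound} with the coefficients $A_{p,\g},B_{p,\g},C_{p,\g},D_{p,\g}$), and Pohozaev-type identities hold for solutions, not for arbitrary PS sequences, so one must either bound the solutions a posteriori or replace the plain mountain pass by a monotonicity-trick or constrained-minimization argument. Finally, the positivity step ``replace $u$ by $|u|$'' is a standard but imprecise gloss: $|u|$ of a mountain-pass critical point need not be critical, and the usual fix is to truncate the nonlinearity to $(u^+)^{p-1}$ and invoke the maximum principle, as the paper itself indicates when discussing \eqref{eqeps}.
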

We remark that stationary solutions coming from Theorem \ref{th:known} are definitely convincing for our theory, since both the energy
	\begin{multline}\label{eq:energy}
		{\mathcal E} (u(x)e^{-i\o t},\phi,{\bf 0}) \\= \frac 12 \irt \left[|\n u |^2 + |\n \phi|^2+(m^2+\o^2)u^2 -2e\o\phi u^2+e^2\phi^2u^2 - \frac 2 p |u|^{p}\right]\,dx
	\end{multline}
and the charge 
	\begin{equation}\label{eq:charge}
	{\mathcal Q} (u(x)e^{-i\o t},\phi,{\bf 0}) = e\irt (e\phi-\o)u^2\,dx
	\end{equation}
are finite (see \cite[Section 2.3]{F}). With an abuse of language, we will call {\it finite energy solution} a couple $(u,\phi)$ solving \eqref{electrostatic} and such that the energy related with \eqref{eq:matter} and \eqref{eq:elecmag} is finite. 

The strategy to achieve the result in Theorem \ref{th:known} consists in approaching \eqref{electrostatic} variationally and using usual tools of critical points theory to find solutions of the system as critical points of the functional
	\begin{equation*}
		I_\o(u,\phi) = \frac 12 \irt \left[|\n u |^2 - |\n \phi|^2+(m^2-\o^2)u^2 +2e\o\phi u^2-e^2\phi^2u^2\right]dx - \frac 1 p |u|^{p}\,dx
	\end{equation*} 
in $\HT\times\D$.\\

It is well known (see for example \cite{BF}) that the reduction method permits to convert the problem of finding critical points of $I_\o$ to the equivalent one of looking for critical points of the functional
	\begin{equation*}
J_\o(u)=\frac 1 2\irt [|\n u|^2+(m^2-\o^2)u^2+e\o\phi_u
u^2]\,dx-\frac 1 p \irt
|u|^p\, dx
\end{equation*}  
defined in $\HT$, where $\phi_u$ represents the unique function in $\D$ solving
	\begin{equation}\label{eq:phi_u}
		-\Delta \phi=e(\o-e\phi) u^2 
	\end{equation}
in the dual of $\D$. In order to promote the necessity of obtaining finite energy (and charge) solutions, the assumption $m>\o$ seems to arise quite naturally to implement our variational strategy. On the other hand, carrying out a deeper analysis of the model, we realize that such an assumption turns out to be purposely technical, since there is no physical motivation preventing the existence of finite energy standing waves having the form $\psi(x,t)=u(x)e^{-im t}$ (namely $m=\o$), under the gauge choice \eqref{FO}.\\
From the mathematical point of view, the idea of finding finite energy solutions to the system \eqref{electrostatic} in the limit case $m=\o$ appears immediately challenging.

Consider indeed the system
\begin{equation}    \label{main eq}\tag{$\mathcal P$}
\left\{
\begin{array}{ll}
-\Delta u +e(2\o-e\phi)\phi u-u^{p-1}=0 & \hbox{in }\RT,
\\
-\Delta \phi=e(\o-e\phi) u^2 & \hbox{in }\RT,\\
u >0, \phi >0& \hbox{in }\RT,
\end{array}
\right.
\end{equation}
for $e,$ $\o>0$ and $p> 1$.\\ 
The functional associated to the problem is 
	\begin{equation*}
				I_m(u,\phi) = \frac 12 \irt \left[|\n u |^2 - |\n \phi|^2 +2e\o\phi u^2-e^2\phi^2u^2\right] dx - \frac 1 p |u|^{p}\,dx
	\end{equation*}
whereas the formal reduced functional is
	\begin{equation*}
		J_m(u)=\frac 1 2\irt [|\n u|^2+e\o\phi_u u^2]\,dx-\frac 1 p \irt |u|^p\, dx.
	\end{equation*}
There is no difficulty in observing that, even if $I_m$ is of course well defined in $\HT\times\D$, the lack of an explicit expression of the $L^2$ norm of $u$ in $I_m$ makes it hard to apply standard critical points theory arguments in that space.\\
On the other hand, if we assume $\D\times\D$ as our setting, we have to face both the problem of inapplicability of the reduction method, and the difficulty in estimating the energy of any possible solution.\\
Finally, the introduction of an {\it ad hoc} functional setting as a sort of middle ground between those two spaces, does not immediately seem a feasible way.\\
A first attempt to solve \eqref{main eq} was made in \cite{APP}, by means of a perturbation argument (see also \cite{AP,BBS}). In that paper, the problem was considered in presence of an inhomogeneous nonlinearity in the first equation, and a solution $(u,\phi)$ in the sense of distributions was obtained in $\D\times\D$ as the limit of a sequence of solutions of approximating problems like these
\begin{equation}    \label{eq:pert}
\left\{
\begin{array}{ll}
-\Delta u +[\eps+e(2\o-e\phi)\phi] u-f(u)=0 & \hbox{in }\RT,
\\
-\Delta \phi=e(\o-e\phi) u^2 & \hbox{in }\RT.
\end{array}
\right.
\end{equation}
Unfortunately, the lack of information about the $L^2$ norm of $u$ did not permit to estimate the energy in order to confirm the validity of the model.

In this paper, we bridge this gap following an idea in \cite{IR} where the application of a comparison principle leads to show the exponential decay property possessed by our solution $u$.\\ 
Moreover, by similar arguments as those in \cite{R} and a new upper bound estimate holding uniformly for the  $L^p$ norm of suitable solutions of 
\begin{equation}    \label{eqeps}\tag{$\mathcal P_\eps$}
\left\{
\begin{array}{ll}
-\Delta u +[\eps+e(2\o-e\phi)\phi] u-u^{p-1}=0 & \hbox{in }\RT,
\\
-\Delta \phi=e(\o-e\phi) u^2 & \hbox{in }\RT,\\
u >0, \phi >0& \hbox{in }\RT,
\end{array}
\right.
\end{equation} as $\eps\to 0$, we are allowed to deal with a power-like nonlinearity.

The main result in this paper is the following.

	\begin{theorem}\label{main}
		For any $p\in \left(3, 6\right)$ there exists a finite energy solution $(u,\phi) \in C^2(\RT)\times C^2(\RT)$  to the problem \eqref{main eq}.
	\end{theorem}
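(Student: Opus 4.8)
The plan is to obtain the desired solution of \eqref{main eq} as a limit, along $\eps\to 0^+$, of solutions of the regularized family \eqref{eqeps}. The starting observation is that, for fixed $\eps>0$, problem \eqref{eqeps} is exactly \eqref{electrostatic} with mass $m_\eps:=\sqrt{\o^2+\eps}>\o$; since $p\in(3,6)$ forces $g(p)=1$ and $\o<m_\eps$, Theorem \ref{th:known} supplies a nontrivial positive radial solution $(u_\eps,\phi_\eps)$, which may be taken of mountain-pass type, so that the reduced energy $J_\eps(u_\eps)$ (i.e.\ the functional $J_\o$ with $m^2-\o^2$ replaced by $\eps$) equals the corresponding min-max level $c_\eps$. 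This is precisely where $p\ge 3$ is used: as $\eps\to 0$ one has $\o/m_\eps\to 1^-$, so only the branch $g(p)=1$ survives in the limit. A maximum-principle argument on the second equation gives the uniform two-sided bound $0\le e\phi_\eps\le\o$, which I will use throughout.

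The core of the argument is to make the estimates on $(u_\eps,\phi_\eps)$ uniform in $\eps$. I would derive three identities: the Nehari identity (testing the first equation with $u_\eps$), the charge identity (testing the second equation, \eqref{eq:phi_u}, with $\phi_\eps$), and the Pohozaev identity (obtained by evaluating the action along the dilations $(u_\eps(\cdot/t),\phi_\eps(\cdot/t))$). Eliminating $\int\phi_\eps^2u_\eps^2$ and the $L^2$-term between them, the energy $J_\eps(u_\eps)$ can be rewritten as a combination, with coefficients depending only on $p$, of the manifestly nonnegative quantities $\|\nabla\phi_\eps\|_2^2$, $\eps\|u_\eps\|_2^2$ and $\int\phi_\eps u_\eps^2$. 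Since $J_\eps$ differs from the $\eps=0$ functional only by the nonnegative term $\tfrac\eps2\|u\|_2^2$ (the field $\phi_u$ being $\eps$-independent), the level $c_\eps$ is monotone in $\eps$ and hence bounded above uniformly by its value at a fixed $\eps_0>0$; inserting this into the previous identity yields uniform bounds on $\|\nabla u_\eps\|_2$, $\|\nabla\phi_\eps\|_2$, $\int\phi_\eps u_\eps^2$ and, crucially, on $\|u_\eps\|_p$. This uniform $L^p$ bound is exactly the estimate missing in \cite{APP}, and I expect producing it — in particular controlling $\|\nabla\phi_\eps\|_2^2$, whose contribution to the energy carries an unfavourable sign for $p$ near $3$ — to be the main obstacle of the whole argument.

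With these bounds I would extract, along a subsequence, weak limits $u_\eps\weakto u$ and $\phi_\eps\weakto\phi$ in $\Dr$ and pass to the limit in the weak formulation of \eqref{eqeps}, using $0\le e\phi_\eps\le\o$ and the uniform $L^p$ estimate to control the coupling terms, so that $(u,\phi)$ solves \eqref{main eq} weakly. The delicate point is the nontriviality of $u$: since $\Dr$ does not embed compactly into $L^p(\RT)$ for $p<6$, one cannot deduce $u\ne 0$ directly from the Nehari lower bound $\|u_\eps\|_p\ge\delta>0$. I would instead rule out vanishing by exploiting that the nonnegative, long-range (Coulomb-like) potential $e(2\o-e\phi_\eps)\phi_\eps$ provides a weak confinement, following the concentration analysis of \cite{R}; this prevents the $L^p$ mass from escaping to infinity and forces $\|u\|_p\ge\delta$, whence $u\not\equiv 0$.

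It remains to upgrade the weak solution to a finite-energy classical one. A standard elliptic bootstrap, starting from $u\in\Dr\cap L^p(\RT)$ and the bounded potential $\phi$, gives $(u,\phi)\in C^2(\RT)\times C^2(\RT)$, while the strong maximum principle (together with $0\le e\phi\le\o$) yields $u,\phi>0$. Finally, to see that the energy \eqref{eq:energy} is finite — the sole issue being the term $(m^2+\o^2)\|u\|_2^2$, i.e.\ whether $u\in L^2(\RT)$ — I would, following \cite{IR}, build a radial supersolution for the first equation and invoke a comparison principle to obtain a fast (exponential-type) decay of $u$. Combined with the Coulomb behaviour of $\phi$ recorded in Remark \ref{re:cou} and with $e\phi\le\o$, this renders every term in \eqref{eq:energy}, as well as the charge \eqref{eq:charge}, integrable, completing the proof.
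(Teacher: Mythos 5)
Your overall strategy coincides with the paper's: approximate by \eqref{eqeps}, derive uniform bounds from the Nehari and Pohozaev identities, extract weak limits, and obtain $u_0\in L^2(\RT)$ (hence finite energy and charge) by a comparison argument in the spirit of \cite{IR}. Your last step is essentially Proposition \ref{pr:crucial}: one needs the Coulomb lower bound $\phi_0(r)\ge C/r$, a first comparison showing $u_0\le\phi_0$ for large $|x|$ (so that $u_0^{p-2}\le\frac{e\o}{2}\phi_0$ there, which is where $p>3$ is used again), and only then the comparison with a solution of $-\Delta w+\frac{\g}{|x|}w=0$, which decays exponentially; your sketch is compatible with this.

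There is, however, a genuine gap at the nontriviality step. You take for granted ``the Nehari lower bound $\|u_\eps\|_p\ge\delta>0$'' and present the only remaining difficulty as preventing escape of mass to infinity. In the limit $\eps\to0$ there is no mass term, so the Nehari identity only gives $\|\nabla u_\eps\|_2^2+\int_{\RT} e(2\o-e\phi_\eps)\phi_\eps u_\eps^2\,dx\le\|u_\eps\|_p^p$, and for $p<6$ no Sobolev-type inequality controls $\|u\|_p$ by $\|\nabla u\|_2$ alone; hence no lower bound on $\|u_\eps\|_p$ follows from Nehari. The paper's way out is the chain Proposition \ref{pr:est} $\rightarrow$ Proposition \ref{pr:lp} $\rightarrow$ the scaling argument with the functionals $M[u]$ and $N[u]$: one proves the interpolation-type inequality $\|u_n\|_p^p\le C\left(\|\nabla u_n\|_2^2+\int_{\RT}\phi_n u_n^2\,dx\right)^{\frac{2p-3}{3}}$ uniformly on $\mathcal B_M$ (this is where the uniform bound on $\|\nabla\phi_n\|_2$ matters, so that the region on which $\phi_{u_n}$ is bounded below as in Lemma \ref{le:due} is independent of $n$), and only then does Nehari, combined with $\frac{2p-3}{3}>1$ for $p>3$, yield $\|u_n\|_p\ge C>0$. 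The same weighted $L^2$ estimate, via the embeddings of \cite{SWW1,SWW2}, is what makes the convergence $u_n\to u_0$ strong in $L^q$ for $q\in\left(\frac{18}{7},6\right)$, so that the lower bound survives in the limit. This inequality is the paper's main technical novelty and your proposal neither supplies it nor a substitute; note also that the quantity you single out as the main obstacle --- the uniform upper bound on $\|u_\eps\|_p$ --- is in fact the easy consequence of Nehari once $\|\nabla u_\eps\|_2$ and $\int_{\RT}\phi_\eps u_\eps^2\,dx$ are bounded.
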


The paper is organized in two sections.

In Section \ref{sec:pre} we provide an inequality useful to get a unifom upper bound on certain $L^2$ weighted norms. It will be used to control the $L^p$ norms of approximating solutions.

In Section \ref{sec:main} we prove our main Theorem \ref{main} by an argument based on the application of the comparison principle.

In what follows, the letter $C$ denotes a positive constant which may change from line to line. We also point out that, everytime we will handle radial functions, with an abuse of notation we will treat them as functions of one or three variables, denoting the argument by $r\in (0,+\infty)$ or $x\in\RT$. 
\section{Preliminary results}\label{sec:pre}
	As usual, for any $u\in \D\cap L^{\frac {12} 5}(\RT)$, we denote by $\phi_u$ the unique function in $\D$ satisfying \eqref{eq:phi_u} in a weak sense. \\
	It is well known that
		\begin{itemize}
			\item[1.] $\phi_u\ge 0$,
			\item[2.] $e\phi_u\le \o$ in the set $\{x\in\RT\mid u(x)\neq 0\}$,
			\item[3.] if $u$ is radial, then $\phi_u$ is radial.
		\end{itemize}
	We denote by $\Hr$ and $\Dr$ the spaces of radial functions respectively in $\HT$ and $\D$.  
	For all $M\ge 0$, define 
	$$\B_M:=\{u\in\Dr\cap L^{\frac {12} 5}(\RT)\mid \phi_u\in C^2(\RT)\hbox{ and } \|\n\phi_u\|_2\le M\}.$$
	The main object of this section is to prove the following result.

\begin{proposition}\label{pr:est}
		For all $\a>\frac 12$ and for all $M\ge 0$ there exists $C>0$ such that if $u\in \B_M$, then
			\begin{equation*}
				\irt \frac {u^2(x)}{\sqrt{|x|}(1+|\log |x||)^\a}\, dx\le C \left(\left(\irt |\n u|^2\, dx\right)^2+ \irt \phi_uu^2\,dx\right)^{\frac 12} .
			\end{equation*}
\end{proposition}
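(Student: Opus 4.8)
The plan is to pass to radial integrals in $r=|x|$ and exploit two complementary sources of decay for $u$: the pointwise bound governed by $\|\n u\|_2$, and the integrated decay hidden in $\irt\phi_u u^2$. Writing $dx=4\pi r^2\,dr$, the left-hand side equals $4\pi\int_0^\infty u^2(r)\,r^{3/2}(1+|\log r|)^{-\alpha}\,dr$, and I may assume $\irt\phi_u u^2<\infty$ (otherwise the claim is trivial). First I would record the Strauss-type bound $|u(r)|\le C\|\n u\|_2\,r^{-1/2}$ for radial $u\in\Dr$, obtained by writing $u(r)=-\int_r^\infty u'(s)\,ds$ and applying Cauchy--Schwarz against $\int_r^\infty s^{-2}\,ds=r^{-1}$ together with $\int_0^\infty |u'|^2 s^2\,ds=\frac1{4\pi}\|\n u\|_2^2$. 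The same estimate for $\phi_u$ gives $\phi_u(r)\le C\|\n\phi_u\|_2\,r^{-1/2}\le CMr^{-1/2}$ on $\B_M$, so there is a radius $R_0=R_0(M)\ge1$, depending only on $M$ (and $\o,e$), with $e\phi_u(r)\le\o/2$, hence $\o-e\phi_u(r)\ge\o/2$, for all $r\ge R_0$.

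Next I would split the integral at $R_0$. On $(0,R_0)$ the pointwise bound yields $u^2(r)\,r^{3/2}(1+|\log r|)^{-\alpha}\le C\|\n u\|_2^2\,r^{1/2}(1+|\log r|)^{-\alpha}$, whose integral over $(0,R_0)$ is a finite constant depending only on $R_0$, hence only on $M$; this contribution is therefore $\le C(M)\|\n u\|_2^2=C(M)(\|\n u\|_2^4)^{1/2}$. For the outer region I introduce the tail mass $g(r):=\int_r^\infty u^2(s)\,s\,ds$, so that $u^2(r)=-g'(r)/r$ and $g(r)\to0$. Using property $2$ of $\phi_u$, the density $f=e(\o-e\phi_u)u^2$ is nonnegative, so the Newtonian representation gives $\phi_u(r)\ge\int_r^\infty f(s)\,s\,ds\ge\frac{e\o}{2}\,g(r)$ for $r\ge R_0$, by the lower bound on $\o-e\phi_u$ from the first step. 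Multiplying by $u^2r^2=-g'(r)\,r$ and integrating by parts (the boundary term at infinity vanishes because $\int g^2<\infty$ forces $g=o(r^{-1/2})$) then produces the crucial estimate $\int_{R_0}^\infty g^2\,dr\le C\irt\phi_u u^2\,dx$.

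For the outer integral itself I would write $u^2(r)\,r^{3/2}(1+|\log r|)^{-\alpha}=-g'(r)\,w(r)$ with $w(r)=r^{1/2}(1+\log r)^{-\alpha}$ (valid since $r\ge R_0\ge1$) and integrate by parts. From $w'(r)\le\frac12 r^{-1/2}(1+\log r)^{-\alpha}$ and Cauchy--Schwarz the resulting integral is $\le C\big(\int_{R_0}^\infty g^2\,dr\big)^{1/2}\big(\int_{R_0}^\infty \tfrac{dr}{r(1+\log r)^{2\alpha}}\big)^{1/2}$, and here the hypothesis $\alpha>\tfrac12$ is exactly what makes the last integral finite (substitute $t=\log r$); by the previous paragraph this is $\le C(\irt\phi_u u^2)^{1/2}$. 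The delicate point is the boundary term $g(R_0)\,w(R_0)$: a crude bound by a constant depending on $M$ would fail for small $u$, since both sides of the proposition are homogeneous of degree two in $u$. I would instead write $g(R_0)=g(r)+\int_{R_0}^r u^2 s\,ds$ for $r\in(R_0,R_0+1)$, bound the integral by the radial estimate ($\le C\|\n u\|_2^2$), and average in $r$ to obtain $g(R_0)\le\big(\int_{R_0}^{R_0+1}g^2\big)^{1/2}+C\|\n u\|_2^2\le C(\irt\phi_u u^2)^{1/2}+C\|\n u\|_2^2$.

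Collecting the three contributions, the left-hand side is bounded by $C(M,\alpha)\big(\|\n u\|_2^2+(\irt\phi_u u^2)^{1/2}\big)$, and the elementary inequality $a+b\le\sqrt2\,(a^2+b^2)^{1/2}$ with $a=\|\n u\|_2^2$, $b=(\irt\phi_u u^2)^{1/2}$ gives precisely the claimed bound. I expect the genuine obstacle to be the boundary term $g(R_0)\,w(R_0)$: the argument only works because the averaging trick trades the pointwise value of $g$ for its $L^2$-average (controlled by $\irt\phi_u u^2$) plus a gradient term, making the bound legitimately proportional to the right-hand side. A secondary technical matter is justifying the Newtonian representation of $\phi_u$ and the vanishing of the boundary terms in the integrations by parts, both of which rely on the regularity and integrability built into the definition of $\B_M$ (namely $\phi_u\in C^2(\RT)$ and $\|\n\phi_u\|_2\le M$).
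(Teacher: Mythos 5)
Your argument is correct and reaches the stated inequality by a genuinely different route from the paper's. The paper squares the weighted integral, splits the far field into dyadic blocks (Lemma \ref{le:uno}, following Ruiz), and reduces everything to the bilinear quantity $\iint u^2(r)r\,u^2(s)s\min(r,s)\,ds\,dr$, which is then controlled by $\irt\phi_u u^2\,dx$ via the ODE for $\phi_u$ (Lemma \ref{le:due}); the inner region is handled by H\"older and Sobolev. You instead package the far-field information into the single tail mass $g(r)=\int_r^\infty u^2(s)s\,ds$, prove $\int_{R_0}^\infty g^2\,dr\le C\irt\phi_u u^2\,dx$ (which carries the same content as Lemma \ref{le:due}, since by Tonelli $\int_{R_0}^\infty g^2\,dr=\iint_{(R_0,\infty)^2}u^2(r)r\,u^2(s)s\,(\min(r,s)-R_0)\,ds\,dr$), and replace the dyadic decomposition by an integration by parts against $w(r)=r^{1/2}(1+\log r)^{-\a}$ followed by Cauchy--Schwarz; this is more elementary and makes the role of $\a>\tfrac12$ completely transparent through the convergence of $\int_{R_0}^{\infty} r^{-1}(1+\log r)^{-2\a}\,dr$. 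Your averaging treatment of the boundary term $g(R_0)w(R_0)$ is the right move and is needed precisely because, unlike the dyadic sum, the integration by parts produces boundary terms; your choice of $R_0$ so that $e\phi_u\le\o/2$ for $r\ge R_0$ is the same mechanism as the paper's $R_0>\max(1,C_1\|\n\phi_u\|_2^2)$ and is where membership in $\B_M$ enters. One justification should be repaired: the vanishing of the boundary term at infinity in your first integration by parts is argued circularly (you invoke $\int g^2<\infty$, which is what that very computation is meant to establish). Either derive the identity by Tonelli as above, with nonnegative integrands and no boundary terms at all, or note that $T\,g(T)^2\le 2\int_T^\infty u^2(s)s^2 g(s)\,ds\to0$ as $T\to\infty$, because $\int_{R_0}^\infty u^2(s)s^2 g(s)\,ds\le \tfrac{2}{e\o}\int_{R_0}^\infty \phi_u(s)u^2(s)s^2\,ds<\infty$ by your lower bound $\phi_u\ge\tfrac{e\o}{2}g$. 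With that fix the proof is complete.
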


We start with some preliminary lemmas. The following one is essentially contained in \cite{R}; we recall it, adapted to our need.
	\begin{lemma}\label{le:uno}
		For all $\a>\frac 12$ there exists $C_\a>0$ such that for any $R>1$ and $h:\R_+\to\R_+$ measurable 
			\begin{equation*}
				\left(\int_{3R}^{+\infty}h(r)\, dr\right)^2\le C_\a \int_R^{+\infty}\left(\int_{\frac r2}^{2r}(1+\log r)^\a h(r)(1+\log s)^\a h(s)\,ds\right)\,dr.
			\end{equation*}
			\begin{proof}
				Let $\a >\frac 12$ and consider $R>1$. Set $k=[\log_2R]+1$, where $[\cdot]$ denotes the integer part. So $R<2^k<3R$. Following the idea in \cite{R}, we use the inequality $\left(\sum_{n=k}^\infty a_n\right)^2\le \sum_{n=k}^\infty\frac 1 {b_n} \sum_{n=k}^\infty b_na_n^2$ for $a_n=\int_{2^n}^{2^{n+1}}h(r)\,dr$ e $b_n=(1+n)^{2\a}$, and we have
					\begin{align*}
						\left(\int_{3R}^\infty h(r)\, dr\right)^2&\le \left(\sum_{n=k}^\infty\int_{2^n}^{2^{n+1}}h(r)\,dr\right)^2\\
						&\le \sum_{n=0}^\infty\frac 1 {(1+n)^{2\a}} \sum_{n=k}^\infty (1+n)^{2\a}\left(\int_{2^n}^{2^{n+1}}h(r)\,dr\right)^2\\
						&\le C_\a\sum_{n=k}^\infty\left[(1+n)^\a\int_{2^n}^{2^{n+1}}h(r)\,dr\right]^2.
					\end{align*}
				Finally, again as in \cite{R}, we have 
					\begin{multline*}
						\sum_{n=k}^\infty\left[(1+n)^\a\int_{2^n}^{2^{n+1}}h(r)\,dr\right]^2\\
						\le (\log_2e)^{2\a}\int_R^{+\infty}\left(\int_{\frac r 2}^{2r}(1+\log r)^\a h(r)(1+\log s)^\a
						h(s)\, ds\right)\, dr					
					\end{multline*}
				and we conclude.
			\end{proof}
	\end{lemma}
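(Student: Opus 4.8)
The plan is to reduce the stated continuous double-integral inequality to a purely discrete weighted Cauchy--Schwarz estimate by decomposing the half-line $(3R,+\infty)$ into dyadic blocks. First I would set $k=[\log_2 R]+1$, so that $R<2^k<3R$; since $h\ge 0$ and $2^k<3R$, this gives $\int_{3R}^{+\infty}h(r)\,dr\le\int_{2^k}^{+\infty}h(r)\,dr=\sum_{n\ge k}a_n$, where $a_n:=\int_{2^n}^{2^{n+1}}h(r)\,dr$. The whole estimate then hinges on controlling $\left(\sum_{n\ge k}a_n\right)^2$ from above by a sum of squares with a summable weight.

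For the discrete step I would apply the weighted Cauchy--Schwarz inequality $\left(\sum_{n\ge k}a_n\right)^2\le\left(\sum_{n\ge k}b_n^{-1}\right)\left(\sum_{n\ge k}b_na_n^2\right)$ with the weight $b_n=(1+n)^{2\a}$. This is the point at which the hypothesis $\a>\frac 12$ is used: it forces $2\a>1$, so $\sum_{n\ge 0}(1+n)^{-2\a}=:C_\a<\infty$, and after bounding $\sum_{n\ge k}\le\sum_{n\ge 0}$ one obtains $\left(\sum_{n\ge k}a_n\right)^2\le C_\a\sum_{n\ge k}\big[(1+n)^{\a}a_n\big]^2$. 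Writing the square of each $a_n$ as a double integral over $[2^n,2^{n+1}]\times[2^n,2^{n+1}]$ recasts the right-hand side as $C_\a\sum_{n\ge k}(1+n)^{2\a}\int_{2^n}^{2^{n+1}}\!\int_{2^n}^{2^{n+1}}h(r)h(s)\,dr\,ds$.

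It then remains to convert the discrete weight into the logarithmic weights and to recognize the diagonal band of integration. For $r\in[2^n,2^{n+1}]$ I would invoke the elementary inequality $1+\log r\ge(\log 2)(1+n)$, valid since $\log 2<1$, giving $(1+n)^{\a}\le(\log_2 e)^{\a}(1+\log r)^{\a}$, and symmetrically for $s$; hence on each dyadic square $(1+n)^{2\a}\le(\log_2 e)^{2\a}(1+\log r)^{\a}(1+\log s)^{\a}$. I would then check the geometric containment: for $r,s\in[2^n,2^{n+1}]$ one has $s\le 2^{n+1}\le 2r$ and $s\ge 2^n\ge r/2$, so $[2^n,2^{n+1}]^2$ lies inside the band $\{(r,s):2^n\le r\le 2^{n+1},\ r/2\le s\le 2r\}$. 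Enlarging each square to this band by nonnegativity of the integrand, summing over $n\ge k$ to reassemble $\int_{2^k}^{+\infty}$, and finally using $2^k>R$ to pass to $\int_R^{+\infty}$, yields exactly the claimed right-hand side with constant $C_\a(\log_2 e)^{2\a}$.

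The two set inclusions and the logarithmic comparison are routine; the only genuinely delicate choice is the weight $b_n=(1+n)^{2\a}$, whose summability is precisely what pins down the threshold $\a>\frac 12$ and at the same time produces the correct logarithmic power in the final bound. I do not expect a real obstacle beyond the bookkeeping of constants, because the dyadic scale is compatible both with the multiplicative structure of the weight $(1+\log r)^\a$ and with the band $[r/2,2r]$.
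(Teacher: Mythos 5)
Your proof is correct and follows essentially the same route as the paper's: the same dyadic decomposition with $k=[\log_2 R]+1$, the same weighted Cauchy--Schwarz step with weight $b_n=(1+n)^{2\alpha}$ (whose summability is exactly where $\alpha>\frac12$ enters), and the same comparison $1+\log r\ge(\log 2)(1+n)$ on each dyadic block together with the containment $[2^n,2^{n+1}]^2\subset\{(r,s): r/2\le s\le 2r\}$. The only difference is that you spell out in full the two steps the paper delegates to \cite{R}, and your bookkeeping of those steps (including the final constant $C_\alpha(\log_2 e)^{2\alpha}$ and the passage from $\int_{2^k}^{+\infty}$ to $\int_R^{+\infty}$ by nonnegativity) is accurate.
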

	\begin{lemma}\label{le:due}
			There exist two positive constants $C_1$ and $C_2$ such that for any $u\in \Dr\cap L^{\frac{12}5}(\RT)$ such that $\phi_u\in C^2(\RT)$ 
				\begin{equation*}
					\int_{\max(1,C_1\|\n\phi_u\|_2^2)}^{+\infty}\left(\int_{\max(1,C_1\|\n\phi_u\|_2^2)}^{+\infty}u^2(r)u^2(s)rs\min(r,s)\,ds\right)\,dr\le C_2 \irt\phi_uu^2\, dx.
				\end{equation*}
	\end{lemma}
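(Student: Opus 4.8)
The plan is to pass to the radial variable and exploit the ordinary differential equation solved by $\phi_u$. Writing \eqref{eq:phi_u} in radial form, $-\frac{1}{r^2}(r^2\phi_u')'=f$ with $f:=e(\o-e\phi_u)u^2$, and noting that $f\ge 0$ everywhere (it vanishes where $u=0$ and is nonnegative on $\{u\ne 0\}$ by properties~1 and~2 above), an integration from $0$ to $r$ gives $r^2\phi_u'(r)=-\int_0^r s^2 f(s)\,ds\le 0$ (the boundary term at the origin vanishes since $\phi_u\in C^2$). Hence $\phi_u$ is non-increasing and, since $\phi_u\in\Dr$ forces $\phi_u(r)\to 0$ as $r\to+\infty$, integrating once more yields the lower bound
\begin{equation*}
\phi_u(r)=\int_r^{+\infty}\frac{1}{t^2}\Big(\int_0^t s^2 f(s)\,ds\Big)\,dt\ge \frac1r\int_0^r s^2 f(s)\,ds.
\end{equation*}

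First I would turn this into a bound in which $f$ is replaced by $u^2$. The standard radial estimate for $\mathcal D^{1,2}_r(\RT)$ gives $\phi_u(r)\le C r^{-1/2}\|\n\phi_u\|_2$; choosing $C_1$ large enough that $r\ge C_1\|\n\phi_u\|_2^2$ forces $e\phi_u(r)\le \o/2$, we obtain $\o-e\phi_u(r)\ge \o/2$, and therefore $f(r)\ge \tfrac{e\o}{2}u^2(r)$, for every $r\ge \tau:=\max(1,C_1\|\n\phi_u\|_2^2)$. Feeding this into the previous display, for $r\ge\tau$,
\begin{equation*}
\int_\tau^r s^2 u^2(s)\,ds\le \frac{2}{e\o}\,r\,\phi_u(r).
\end{equation*}

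It remains to assemble the double integral. Since the integrand is symmetric in $r$ and $s$ and $rs\min(r,s)=rs^2$ on $\{s\le r\}$, the left-hand side equals
\begin{equation*}
2\int_\tau^{+\infty}u^2(r)\,r\Big(\int_\tau^r u^2(s)s^2\,ds\Big)\,dr\le \frac{4}{e\o}\int_\tau^{+\infty}\phi_u(r)u^2(r)r^2\,dr\le \frac{1}{\pi e\o}\irt\phi_u u^2\,dx,
\end{equation*}
where in the last step I pass back from the radial integral to the one over $\RT$. This proves the claim with $C_1=e^2/(\pi\o^2)$ and $C_2=1/(\pi e\o)$, say.

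The only genuinely delicate point is the calibration of the threshold in the second paragraph: everything hinges on choosing $C_1$ proportional to $\o^{-2}$ so that beyond $\tau$ the screening term $e\phi_u$ drops below $\o/2$, which is exactly what makes $f$ bounded below by a fixed multiple of $u^2$ and hence lets the lower bound on $\phi_u$ absorb the inner integral $\int_\tau^r s^2u^2\,ds$. The monotonicity of $\phi_u$ and its decay at infinity (used to integrate the radial equation and to invoke the radial estimate) are the supporting facts that must be in place, but they follow directly from $f\ge0$ and $\phi_u\in\Dr$.
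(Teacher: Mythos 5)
Your proof is correct and follows essentially the same route as the paper's: integrate the radial ODE for $\phi_u$ twice, use the $\mathcal D^{1,2}_r$ radial decay estimate to choose $C_1$ so that $\o-e\phi_u\ge\o/2$ beyond the threshold $\tau=\max(1,C_1\|\n\phi_u\|_2^2)$, and then pair the resulting lower bound for $\phi_u$ with $r^2u^2(r)$; your symmetrization of the double integral over $\{s\le r\}$ is just a reorganization of the paper's Fubini step with the kernel $\frac{r^2s^2}{\max(r,s)}=rs\min(r,s)$. The only blemish is the closing numerical value $C_1=e^2/(\pi\o^2)$, which omits the constant from the radial lemma (it should be of the form $4e^2C^2/\o^2$), but since only the existence of $C_1$ is asserted this is immaterial.
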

	\begin{proof}
		Let $u$ be in $\Dr\cap L^{\frac {12}5}(\RT)$ and assume $\phi_u$ is in $C^2(\RT)$.\\
		By \cite[Radial Lemma A.III]{BL} we know that
		there exists $C>0$ which does not depend on $\phi_u$ such that for $|x|\ge 1$
		$$\phi_u(x)\le \frac C {\sqrt{|x|}}\|\n \phi_u\|_2.$$ Set $C_1=\frac{4e^2C^2}{\o^2}$.  
		Since $\phi_u$ satisfies
 		\begin{equation}\label{eq:ode}
 			-(r^2\phi_u')'=er^2(\o-e\phi_u) u^2
 		\end{equation}
 		in $[0,+\infty)$,
 		integrating \eqref{eq:ode} in $[0,t)$ for $t>\max (1,C_1 \|\n \phi_u\|_2^2)$, we get
 		\begin{align*}
 		\phi_u'(t)&=-\frac e{t^2}\int_0^t
 		s^2(\o-e\phi_u(s))u^2(s)\,ds\nonumber\\
 		&\le -\frac {e\o}{2t^2}\int_{\max(1,C_1\|\n\phi_u\|_2^2)}^t
 		s^2u^2(s)\,ds.
 		\end{align*}
 		Now, integrating in $(r,+\infty)$ for $r>\max(1,C_1\|\n\phi_u\|_2^2)$, we obtain
 			\begin{align}\label{eq:phi}
	 			\phi_u(r)&\ge \frac {e\o}{2}\int_r^{+\infty}\frac 1{t^2}\left(\int_{\max(1,C_1\|\n\phi_u\|_2^2)}^t
 			s^2u^2(s)\,ds\right)dt\nonumber\\
 							 &=\frac {e\o}{2}\int_{\max(1,C_1\|\n\phi_u\|_2^2)}^{+\infty}s^2u^2(s)\left(\int_{\max(r,s)}^{+\infty}\frac 1{t^2}\,dt\right)ds\nonumber\\
 							 &=\frac {e\o}{2}\int_{\max(1,C_1\|\n\phi_u\|_2^2)}^{+\infty}\frac{s^2}{\max(r,s)}u^2(s)ds.
 			\end{align}
 		Finally, multiplying by $r^2u^2(r)$ and integrating in $\big(\max(1,C_1\|\n\phi_u\|_2^2),+\infty\big)$, we have
 			\begin{align*}
	 			\int_0^{+\infty}r^2\phi_u(r)u^2(r)\,dr &\ge\frac {e\o}{2} \int_{\max(1,C_1\|\n\phi_u\|_2^2)}^{+\infty}\left(\int_{\max(1,C_1\|\n\phi_u\|_2^2)}^{+\infty}\frac{r^2s^2}{\max(r,s)}u^2(r)u^2(s)\,ds\right)\!dr\\
	 			&=\frac {e\o}{2}\int_{\max(1,C_1\|\n\phi_u\|_2^2)}^{+\infty}\left(\int_{\max(1,C_1\|\n\phi_u\|_2^2)}^{+\infty}\!\!\!rs\min(r,s)u^2(r)u^2(s)\,ds\right)\!dr
 			\end{align*}
 		from which we conclude.
	\end{proof}
	\begin{lemma}\label{le:weight}
			For any $\a>\frac 12$ and $R_0>1$ there exists $C>0$ such that for any measurable function $u:\R_+\to\R$ 
				\begin{multline*}
	\int_0^{+\infty}\!\!\frac{u^2(r)r^{\frac 32}}{(1+|\log r|)^\a}\, dr\\
	\le C \left[\left(\irt|\n u|^2\, dx\right)^2+\int_{R_0}^{+\infty}\left(\int_{R_0}^{+\infty}u^2(r)ru^2(s)s\min(r,s)\,ds\right)dr\right]^{\frac 12}.
				\end{multline*}
	\end{lemma}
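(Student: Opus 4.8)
\emph{Approach and inner region.} I read the inequality as a statement about radial functions on $\RT$ (so the Dirichlet term and the double integral are the radial renderings of three-dimensional quantities) and split the integration at a fixed multiple of $R_0$. If $\irt|\nabla u|^2\,dx=+\infty$ the right-hand side is infinite, so I may assume $u\in\Dr$, for which the standard radial bound $u^2(r)\le C\,r^{-1}\irt|\nabla u|^2\,dx$ holds for every $r>0$ — obtained, as in the Radial Lemma used in the proof of Lemma~\ref{le:due}, by Cauchy--Schwarz applied to $u(r)=-\int_r^{+\infty}u'(t)\,dt$. Since $(1+|\log r|)^\alpha\ge1$ and $r^{1/2}$ is integrable at the origin, this controls the bounded part:
\[\int_0^{6R_0}\frac{u^2(r)r^{3/2}}{(1+|\log r|)^\alpha}\,dr\le\int_0^{6R_0}u^2(r)r^{3/2}\,dr\le C(R_0)\irt|\nabla u|^2\,dx.\]

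\emph{Tail.} For the tail I apply Lemma~\ref{le:uno} with $R=2R_0$ to the nonnegative function $h(r)=u^2(r)r^{3/2}(1+|\log r|)^{-\alpha}$, noting that $\int_{6R_0}^{+\infty}h$ is precisely the tail to estimate. The value $R=2R_0$ is chosen so that the band $[r/2,2r]$ appearing in Lemma~\ref{le:uno} lies inside $[R_0,+\infty)$ for every $r\ge2R_0$; in particular $r,s>1$ there, so the logarithmic weights cancel, $(1+\log r)^\alpha h(r)=u^2(r)r^{3/2}$ and likewise in $s$. Lemma~\ref{le:uno} then yields
\[\left(\int_{6R_0}^{+\infty}h\right)^2\le C_\alpha\int_{2R_0}^{+\infty}\left(\int_{r/2}^{2r}u^2(r)r^{3/2}u^2(s)s^{3/2}\,ds\right)dr.\]
On the band $s\in[r/2,2r]$ one has $\min(r,s)\ge\tfrac12\max(r,s)\ge\tfrac12\sqrt{rs}$, whence $r^{3/2}s^{3/2}=rs\sqrt{rs}\le2\,rs\min(r,s)$; since the integrand is nonnegative and $[r/2,2r]\subseteq[R_0,+\infty)$, the right-hand side is bounded by $2C_\alpha\int_{R_0}^{+\infty}\int_{R_0}^{+\infty}u^2(r)r\,u^2(s)s\min(r,s)\,ds\,dr$. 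Taking square roots shows the tail is dominated by the square root of the double integral.

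\emph{Conclusion and main difficulty.} Summing the two pieces and using $\sqrt{a^2+b}\ge\max(a,\sqrt b)$ for $a,b\ge0$ — with $a=\irt|\nabla u|^2\,dx$ and $b$ the double integral — absorbs both the inner term (linear in the Dirichlet energy) and the tail term (a square root of the double integral) into a single constant times the asserted right-hand side. The one delicate point is the bookkeeping of scales: Lemma~\ref{le:uno} forces the inner band $[r/2,2r]$, so the application must begin far enough out (at $2R_0$, hence the integration starts at $3\cdot2R_0=6R_0$) to guarantee simultaneously that this band sits in $[R_0,+\infty)$, so the double integral over $[R_0,+\infty)^2$ dominates it, and that $\log r,\log s>0$ there, so the logarithmic weights produced by Lemma~\ref{le:uno} exactly cancel those defining $h$. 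The comparison $r^{3/2}s^{3/2}\le2\,rs\min(r,s)$ on $[r/2,2r]$ is precisely what converts the $r^{3/2}$-weighted left-hand side into the $rs\min(r,s)$ kernel of the double integral.
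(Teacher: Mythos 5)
Your argument is correct and follows essentially the same route as the paper: split the integral at $6R_0$, apply Lemma~\ref{le:uno} with $R=2R_0$ to $h(r)=u^2(r)r^{3/2}(1+|\log r|)^{-\alpha}$ so that the logarithmic weights cancel on $[R_0,+\infty)$, and convert $r^{3/2}s^{3/2}$ into $rs\min(r,s)$ on the band $[r/2,2r]$. The only difference is cosmetic: you control the near-origin piece via the pointwise radial estimate $u^2(r)\le C r^{-1}\int_{\mathbb{R}^3}|\nabla u|^2\,dx$, whereas the paper uses H\"older together with the Sobolev inequality $\|u\|_6\le C\|\nabla u\|_2$; both bound that piece by a power of the Dirichlet energy, which the right-hand side dominates.
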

	\begin{proof}
		Let $\a >\frac12$ and $R_0 > 1$ and consider $C_\a>0$ as in Lemma \ref{le:uno} 
		
		Applying Lemma \ref{le:uno} for $h(r)=\frac{u^2(r)r^{\frac 32}}{(1+|\log r|)^\a}$, by Holder and Gagliardo-Nirenberg inequalities, we have that
		\begin{align*}
		\left(\int_0^{+\infty}\!\!\frac{u^2(r)r^{\frac 32}}{(1+|\log r|)^\a}\, dr\right)^2\!\!\!\!&\le 2\left(\int_0^{6R_0}\frac{u^2(r)r^{\frac 32}}{(1+|\log r|)^\a}\, dr\right)^2+2 \left(\int_{6R_0}^{+\infty}\frac{u^2(r)r^{\frac 32}}{(1+|\log r|)^\a}\, dr\right)^2\\
		&\le 2 \left(\int_{0}^{6R_0}u^6(r)r^2\,dr\right)^{\frac 23}\left(\int_{0}^{6R_0}r^{\frac 54}\,dr\right)^{\frac 43}\\
		&\qquad + 2C_\a \int_{2R_0}^{+\infty}\left(\int_{\frac r2}^{2r}u^2(r)r^{\frac 32}u^2(s)s^{\frac 32}\,ds\right)dr\\
		&\le C \left(\irt|\n u|^2\, dx\right)^2\\
		&\qquad+2\sqrt 2 C_\a\int_{2R_0}^{+\infty}\left(\int_{\frac r2}^{2r}u^2(r)ru^2(s)s\min(r,s)\,ds\right)dr\\
		&\le  C \left(\irt|\n u|^2\, dx\right)^2\\
		&\qquad+2\sqrt 2 C_\a\int_{R_0}^{+\infty}\left(\int_{R_0}^{+\infty}u^2(r)ru^2(s)s\min(r,s)\,ds\right)dr.
		\end{align*}
		In the third inequality we have used that $\sqrt{rs}\le \sqrt 2\min (r,s)$ in the set $\{(r,s)\in\R_+\times\R_+\mid \frac r 2 \le s \le 2r\}$.
	\end{proof}
Now we are ready to prove Proposition \ref{pr:est}
\begin{proof}[Proof of Proposition \ref{pr:est}]
	Consider $\a >\frac 12$ and $M\ge 0$. Take $C_1$ and $C_2$ positive constants as in Lemma \ref{le:due} and $R_0>\max(1, C_1 M^2)$. The conclusion follows from Lemma \ref{le:due} and Lemma \ref{le:weight}.
\end{proof}
By Lemma \ref{le:weight} we also deduce the following estimate on the Lebesgue norms.
	\begin{proposition}\label{pr:lp}
			Let $q\in (\frac{18}7, 6]$ and $R_0>1$.  Then there exists $C>0$ such that for every radial and measurable $u:\RT\to \R$  we have
			\begin{equation*}
				\left(\irt |u|^q\, dx\right)^{\frac 1 q}\le C \left[\irt |\n u|^2\, dx+\left(\int_{R_0}^{+\infty}\left(\int_{R_0}^{+\infty}u^2(r)ru^2(s)s\min(r,s)\,ds\right)dr\right)^{\frac 12}\right]^{\frac 12}
			\end{equation*}
	\end{proposition}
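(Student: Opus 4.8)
The plan is to bound the Lebesgue norm by interpolating between the Dirichlet norm and the weighted $L^2$-norm governed by Lemma \ref{le:weight}, and then to feed Lemma \ref{le:weight} into the resulting estimate through Young's inequality. Throughout set $A:=\irt|\nabla u|^2\,dx$ and
\[
\mathcal D:=\int_{R_0}^{+\infty}\!\!\left(\int_{R_0}^{+\infty}\!\!u^2(r)\,r\,u^2(s)\,s\min(r,s)\,ds\right)dr .
\]
We may assume $A<+\infty$, otherwise the right-hand side is infinite and there is nothing to prove; thus $u\in\Dr$, and we may exploit both the Sobolev embedding $\Dr\hookrightarrow L^6(\RT)$ and the radial decay $|u(x)|\le C|x|^{-1/2}\|\nabla u\|_2$ (as in \cite[Radial Lemma A.III]{BL}). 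Fix an auxiliary exponent $\alpha>\frac12$ and write $W:=\int_0^{+\infty}u^2(r)\,r^{3/2}(1+|\log r|)^{-\alpha}\,dr$ for the weighted quantity appearing in Lemma \ref{le:weight}.

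The core of the argument is the interpolation inequality
\[
\left(\irt|u|^q\,dx\right)^{2/q}\le C\left(A+A^{s}W^{1-s}\right),\qquad s=s(q)\in(0,1),
\]
which I would establish by splitting $\RT=B_{R_0}\cup(\RT\setminus B_{R_0})$. On $B_{R_0}$, since $q\le 6$, Hölder's inequality on this bounded ball together with $\Dr\hookrightarrow L^6$ yields $\|u\|_{L^q(B_{R_0})}^2\le C\|u\|_{6}^2\le C A$. On the exterior region the decay and the weight must be played against each other: passing to the radial variable (equivalently to $v=ru$, which converts the Dirichlet integral into $\int (v')^2\,dr$) and invoking a weighted Gagliardo–Nirenberg inequality, one distributes the power $q$ so that part of $|u|^q$ is absorbed by the negative power of $|x|$ coming from the radial decay while the remaining $u^2$ is measured in $W$, arranged so that the leftover deterministic integral over $\{|x|>R_0\}$ converges. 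This gives $\|u\|_{L^q(\RT\setminus B_{R_0})}^2\le C A^{s}W^{1-s}$. It is precisely the convergence of this residual integral at infinity—where the weight $r^{3/2}$ together with the logarithmic factor enters—that pins down the restriction $q>\frac{18}7$; this exterior estimate is the main obstacle.

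It then remains to combine the two ingredients. By Lemma \ref{le:weight} one has $W\le C\big(A^{2}+\mathcal D\big)^{1/2}\le C\big(A+\mathcal D^{1/2}\big)$, and Young's inequality with conjugate exponents $1/s$ and $1/(1-s)$ gives
\[
A^{s}W^{1-s}\le C\,A^{s}\big(A+\mathcal D^{1/2}\big)^{1-s}\le C\big(A+\mathcal D^{1/2}\big).
\]
Together with the bound on $B_{R_0}$ this produces $\left(\irt|u|^q\,dx\right)^{2/q}\le C\big(A+\mathcal D^{1/2}\big)$, which is exactly the asserted inequality after taking square roots. As indicated, the delicate step is the exterior interpolation: the matching between the radial decay rate $|x|^{-1/2}$ and the weight hidden in $W$ is efficient enough only for $q>\frac{18}7$, and it is there that the logarithmic correction has to be controlled with care, so I expect that to absorb the bulk of the technical work.
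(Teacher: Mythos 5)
Your overall architecture is the right one and in fact mirrors the paper's: reduce the $L^q$ norm to the Dirichlet integral plus the weighted quantity $W$, invoke Lemma \ref{le:weight} to control $W$ by $\big(A^2+\mathcal D\big)^{1/2}\le A+\mathcal D^{1/2}$, and close with elementary algebra (your final Young-inequality step is correct, and the treatment of $B_{R_0}$ via $\Dr\hookrightarrow L^6$ is fine). However, the step you yourself flag as ``the main obstacle'' --- the exterior estimate $\|u\|_{L^q(\RT\setminus B_{R_0})}^2\le C A^{s}W^{1-s}$ --- is not proved, and it is not a technical afterthought: it is essentially the entire content of the proposition. The paper does not prove it by hand; it cites the weighted radial Sobolev embeddings of Su--Wang--Willem \cite{SWW1,SWW2}, which give directly
\begin{equation*}
\left(\irt |u|^q\, dx\right)^{\frac 2 q}\le C \left(\irt |\n u|^2\,dx + \irt \frac{u^2}{1+|x|^\eta}\, dx\right)
\end{equation*}
for suitable $\eta>\frac12$ precisely in the range $q\in(\frac{18}{7},6]$, and then compares the weight $(1+|x|^\eta)^{-1}$ with $\sqrt{|x|}^{-1}(1+|\log|x||)^{-\a}$ at $0$ and at infinity before applying Lemma \ref{le:weight}.

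The concrete problem with the route you sketch is that the natural implementation fails for part of the stated range. Writing $|u|^q=|u|^{q-2}u^2$ on $\{|x|>R_0\}$ and using the radial decay $|u(x)|\le C|x|^{-1/2}\|\n u\|_2$ to dominate $|u|^{q-2}$ by the weight requires $|x|^{-(q-2)/2}\lesssim |x|^{-1/2}(1+|\log|x||)^{-\a}$, i.e.\ $\frac{q-2}{2}>\frac12$, which only covers $q>3$; for $q\in(\frac{18}{7},3]$ the pointwise decay is not strong enough and one needs the finer (dyadic) analysis that is the substance of \cite{SWW1,SWW2}, where the threshold $\frac{18}{7}=\frac{2(2N-2+a)}{2N-2-a}$ (with $N=3$, $a=\frac12$) originates. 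As written, your argument either must be restricted to $q>3$ or must import the Su--Wang--Willem theorem, at which point the interpolation machinery becomes unnecessary and the proof collapses to the paper's two-line derivation. I would recommend replacing the sketched ``weighted Gagliardo--Nirenberg'' step by an explicit appeal to those embedding theorems.
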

	\begin{proof}
		Fix $q\in(\frac{18}7, 6]$ and $R_0>1$. By continuous embedding theorems proved in \cite{SWW1,SWW2}, there exists $\eta >\frac 12$ such that the following inequality holds for some positive constant $C$ and any $u\in \Dr\cap L^2(\RT, V(x)dx)$, where $V(x)=\frac 1{1+|x|^\eta}$:
			\begin{equation*}
				\left(\irt |u|^q\, dx\right)^{\frac 1 q}\le C \left(\irt |\n u|^2\,dx + \irt \frac{u^2}{1+|x|^\eta}\, dx\right)^{\frac 12}.
			\end{equation*}
		Now choose $\a >\frac 12 $. Since 
		$$\lim_{|x|\to 0}\frac{\sqrt{|x|}(1+|\log|x||)^\a}{1+|x|^\eta}=0=\lim_{|x|\to +\infty}\frac{\sqrt{|x|}(1+|\log|x||)^\a}{1+|x|^\eta},$$ 
		we deduce that
			\begin{equation*}
				\left(\irt |u|^q\, dx\right)^{\frac 1 q}\le C \left(\irt |\n u|^2\,dx + \irt \frac{u^2}{\sqrt{|x|}(1+|\log|x||)^\a}\, dx\right)^{\frac 12}													
			\end{equation*} 
		and then the conclusion follows by Lemma \ref {le:weight}.
	\end{proof}

\section{Existence of a finite energy solution}\label{sec:main}
In this section we assume $p\in(3, 6)$.\\
Looking at the proof of Theorem \ref{th:known} (see  in \cite{APP,BF,DM}), we have that for any $\eps>0$ there exists a solution $(u_\eps,\phi_{u_\eps})$ to the problem \eqref{eqeps} (positiveness can be deduced by standard arguments based on the maximum principle), where $u_\eps$ is found as a critical point of the functional 
\begin{equation*}
J_\eps(u):=\frac 1 2\irt [|\n u|^2+\eps u^2+e(2\o-e\phi_u(x) )\phi_u
u^2]\,dx-\frac 1 p \irt
|u|^p\, dx
\end{equation*}
and, called $c_\eps$ the mountain pass level of $J_\eps$, we have $J_\eps(u_\eps)\le c_\eps$.

	Let $\eps_n\to 0$ and consider the sequence $(u_n,\phi_n)\in \Hr\times\Dr$ of solutions of \eqref{eqeps} built by Theorem \ref{th:known} for $\eps=\eps_n$. We set $J_n=J_{\eps_n}$ and call $c_n$ the corresponding mountain pass level.\\ 
	By standard elliptic arguments, we can prove that both $u_n$ and $\phi_n$ are in $C^2(\RT)$, for any $n\ge 1$. \\
	Moreover we have the following result about boundedness of the sequence.
		\begin{proposition}\label{pr:bound}
				The sequence $(u_n)_n$ is bounded in $\Dr$ and  there exists $C>0$ such that $\irt e\o\phi_nu_n^2\, dx\le C$ for every $n\ge 1.$ Moreover, there exists $M\ge 0$ such that $(u_n)_n$ is a sequence in $\mathcal B_M$.
		\end{proposition}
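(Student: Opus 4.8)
The plan is to exploit three facts: the Nehari-type identity coming from $u_n$ being a solution of \eqref{eqeps}, an a priori bound on the mountain pass levels $c_n$ that is \emph{uniform} in $n$, and the pointwise properties $\phi_n\ge 0$ and $e\phi_n\le\o$ on $\{u_n\neq 0\}$. The only genuinely delicate point is the uniform control of $c_n$; once it is available every assertion follows by elementary manipulations, so I will isolate it as the main step.

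First I would record the Nehari identity. Multiplying the first equation in \eqref{eqeps} by $u_n$ and integrating gives
\begin{equation*}
\irt|\n u_n|^2\,dx+\eps_n\irt u_n^2\,dx+\irt e(2\o-e\phi_n)\phi_n u_n^2\,dx=\irt u_n^p\,dx.
\end{equation*}
Denote the three terms on the left by $A_n,B_n,D_n$ and the right-hand side by $E_n$. Since $0\le e\phi_n\le\o$ on $\{u_n\neq 0\}$, there $2\o-e\phi_n\ge\o>0$, so $A_n,B_n,D_n\ge 0$; in particular $A_n\le E_n$ and $D_n\le E_n$. Substituting the identity into the functional yields $J_n(u_n)=\left(\frac12-\frac1p\right)E_n$, and together with $J_n(u_n)\le c_n$ and $p>3>2$ this gives $E_n\le\frac{2p}{p-2}\,c_n$. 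Hence, as soon as $(c_n)_n$ is bounded, $E_n$ is bounded and $\|u_n\|_{\Dr}^2=A_n\le E_n$ is bounded, proving the first claim. For the coupling term, writing $D_n=2\irt e\o\phi_n u_n^2\,dx-\irt e^2\phi_n^2u_n^2\,dx$ and using $e^2\phi_n^2\le e\o\phi_n$ on $\{u_n\neq 0\}$ we obtain $\irt e\o\phi_n u_n^2\,dx\le D_n\le E_n$, which is the second claim.

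It remains to prove the uniform bound $c_n\le\bar c$. I would use that $J_\eps$ depends on $\eps$ only through the nonnegative term $\frac{\eps}{2}\irt u^2\,dx$, so that $J_{\eps_n}(u)\le J_{\bar\eps}(u)$ for every $u\in\Hr$, where $\bar\eps:=\sup_n\eps_n<\infty$. Fix any nonzero $w\in\Hr$. On $\{w\neq 0\}$ one has $e\phi_{tw}\le\o$, whence $e(2\o-e\phi_{tw})\phi_{tw}\le\o^2$ there, so along the ray
\begin{equation*}
J_{\bar\eps}(tw)\le\frac{t^2}{2}\irt\left(|\n w|^2+\bar\eps\,w^2+\o^2 w^2\right)dx-\frac{t^p}{p}\irt w^p\,dx,
\end{equation*}
whose supremum over $t\ge 0$ is a finite constant $\bar c\ge 0$ because $p>2$. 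Choosing $T$ so large that $J_{\bar\eps}(Tw)<0$, the segment $t\mapsto tTw$ is admissible in the mountain pass characterization of every $J_{\eps_n}$ (as $J_{\eps_n}(Tw)\le J_{\bar\eps}(Tw)<0$), and therefore $c_n\le\max_{0\le t\le 1}J_{\eps_n}(tTw)\le\max_{s\ge 0}J_{\bar\eps}(sw)=\bar c$, uniformly in $n$.

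Finally, to reach the statement $u_n\in\B_M$ I would estimate $\|\n\phi_n\|_2$. Testing the second equation of \eqref{eqeps} with $\phi_n$ gives $\irt|\n\phi_n|^2\,dx=\irt e(\o-e\phi_n)\phi_n u_n^2\,dx\le\irt e\o\phi_n u_n^2\,dx\le C$, by the coupling bound already obtained. Since each $u_n\in\Hr\subset\Dr\cap L^{\frac{12}{5}}(\RT)$ and $\phi_n=\phi_{u_n}\in C^2(\RT)$, the choice $M:=\sqrt C$ yields $u_n\in\B_M$ for all $n$. I expect the single real obstacle to be the control of the cubic--quartic coupling along the test ray in the estimate for $c_n$: this is precisely where the a priori inequality $e\phi_u\le\o$ is indispensable, as it converts a term with nontrivial $t$-dependence into a harmless $O(t^2)$ contribution and thereby keeps $\bar c$ finite and $n$-independent.
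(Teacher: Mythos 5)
Your overall architecture (Nehari identity, uniform bound on $c_n$, then the second equation to control $\|\n\phi_n\|_2$) contains two sound ingredients: the monotonicity argument $J_{\eps_n}\le J_{\bar\eps}$ along a fixed mountain-pass path does give $\sup_n c_n<+\infty$ (this is what the paper dismisses as ``a simple exercise''), and the passage from $\irt e\o\phi_nu_n^2\,dx\le C$ to $u_n\in\B_M$ by testing the second equation with $\phi_n$ is exactly the paper's closing step. The gap is in the central identity $J_n(u_n)=\left(\tfrac12-\tfrac1p\right)\irt|u_n|^p\,dx$. You obtain it by taking the displayed formula for $J_\eps$ in Section 3 at face value, but that formula is inconsistent with the reduced functional $J_\eps(u)=I_\eps(u,\phi_u)$ whose mountain pass level $c_\eps$ is meant (compare $J_\o$ and $J_m$ in the introduction): eliminating $\|\n\phi_u\|_2^2$ via the second equation turns the nonlocal part of $I_\eps(u,\phi_u)$ into $\tfrac12\irt e\o\phi_u u^2\,dx$, \emph{not} $\tfrac12\irt e(2\o-e\phi_u)\phi_u u^2\,dx$. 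With the correct functional the Nehari identity gives
\begin{equation*}
J_n(u_n)=\left(\frac12-\frac1p\right)\irt|u_n|^p\,dx-\frac12\irt|\n\phi_n|^2\,dx,
\end{equation*}
and the subtracted term is controlled only by $\irt e\o\phi_nu_n^2\,dx\le\irt|u_n|^p\,dx$, so substituting back into $J_n(u_n)\le c_n$ yields nothing better than $-\tfrac1p\irt|u_n|^p\,dx\le c_n$, which is vacuous. The energy bound plus the Nehari identity alone therefore do not bound $\|u_n\|_p$, and your first claim is unproved.

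This is precisely the known obstruction for Klein--Gordon--Maxwell (and Schr\"odinger--Poisson) systems when $p$ is small, and it is why the paper's proof brings in a second, independent identity: for $p\in(3,4]$ it combines $I_n(u_n)\le c_n$ with \emph{both} the Nehari and the Pohozaev identities, choosing the free parameter $\g\in\left(\frac{2-p}{2(6-p)},\frac{4-p}{24-10p}\right)$ so that all four coefficients $A_{p,\g}$, $B_{p,\g}$, $C_{p,\g}$, $D_{p,\g}$ are positive, which bounds $\|\n u_n\|_2$ and $\irt\phi_nu_n^2\,dx$ simultaneously; for $p\in(4,6)$ it invokes the argument of \cite{DM}, where the combination $J_n(u_n)-\tfrac1p\langle J_n'(u_n),u_n\rangle$ does work because the resulting coefficient $\tfrac12-\tfrac2p$ of $\irt e\o\phi_nu_n^2\,dx$ is then positive. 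No purely Nehari-based repair of your argument is available on $(3,4]$, so the Pohozaev input (or an equivalent) is essential there.
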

		\begin{proof}
				If $p\in (3,4]$, then, proceeding as in \cite[Theorem 1.1]{APP}, by suitably combining the inequality $I_n(u_n)\le c_n$ with Nehari and Pohozaev identities, we have that for any $\g\in\R$
				\begin{equation*}
				D_{p,\g}\irt |\n u_n|^2\, dx+\irt [C_{p,\g}\eps_n + B_{p,\g}e\o\phi_n+A_{p,\g}e^2\phi_n^2]u_n^2\le c_n
				\end{equation*}
			where 
				\begin{align*}
					A_{p,\g}&=\frac{1+2\g(p-3)}{p},\\
					B_{p,\g}&=\frac{p-10\g p-4+24\g}{2p},\\
					C_{p,\g}&=\frac{(p-2)(1-6\g)}{2p},\\
					D_{p,\g}&=\frac{p-2p\g -2+12\g}{2p}.
				\end{align*}
			From the same computations as those in \cite[Lemma A.1]{APP}, we deduce that for 
			$\g\in \left(\frac{2-p}{2(6-p)},\frac{4-p}{24-10p}\right)$ we have 			\begin{align*}
				A_{p,\g}&>0,\\
				B_{p,\g}&>0,\\
				C_{p,\g}&>0,\\
				D_{p,\g}&>0,
			\end{align*}
			and then, since it is a simple exercise to see that the sequence $(c_n)_n$ is bounded above, we conclude that both $(\|\n u_n\|_2)_n$ and $\left(\irt \phi_nu_n^2\,dx\right)_n$ are bounded.\\
			If $p\in (4,6)$, then we proceed as in \cite{DM} to obtain again boundedness of $(\|\n u_n\|_2)_n$ and $\left(\irt \phi_nu_n^2\,dx\right)_n$.\\
			Since by the second equation in \eqref{eq:pert} we have 
				\begin{equation*}\label{eq:boun}
					\irt |\n\phi_n|^2\, dx +e^2\irt \phi_n^2u_n^2\, dx =\irt e\o\phi_nu_n^2\, dx,
				\end{equation*}
			we can establish the existence of some $M\ge 0$ such that every element in the sequence $(u_n)_n$ is in $\mathcal B_M$. 
		\end{proof}
	Now we proceed using the same notations and arguments as in \cite{R}: by Proposition \ref{pr:est}, we deduce that for any $\eta >\frac 12$ the sequence $(u_n)_n$ is bounded in $\Dr\cap L^2(\RT, V(x)dx)$, where $V(x)=\frac 1{1+|x|^\eta}$ and, by embedding theorems proved in \cite{SWW1,SWW2}, $(u_n)_n$ is bounded also in $L^q(\RT)$ for all $q\in (\frac {18}7, 6]$. By standard compactness arguments based on the decay estimate holding for functions in $\Dr$ (see \cite[Radial Lemma A.III]{BL}), this embedding is compact for $q\in(\frac{18}7,6)$. Then, up to a subsequence, we have that there exists $u_0\in\Dr$ such that
	\begin{equation}\label{eq:weak}
	u_n\rightharpoonup u_0\hbox{ in }\Dr
	\end{equation} 
	and, for every $q\in (\frac {18}7, 6)$, $u_0\in L^q(\RT)$ and 
	\begin{equation*}
		u_n\to u_0 \hbox{ in } L^q({\RT}).
	\end{equation*}
	In particular, 
	\begin{equation}\label{eq:str}
		u_n\to u_0 \hbox{ in } L^p({\RT}).
	\end{equation}
	By Proposition \ref{pr:bound}, the sequence $(u_n)_n$ is in $\mathcal B_M$ for some $M\ge 0$, and then there exists $\phi_0\in\Dr$ such that, up to a subsequence, 
		\begin{equation}\label{eq:phin}
			\phi_n\rightharpoonup \phi_0\hbox{ in }\Dr.
		\end{equation}
	Of course, $u_0\ge 0$ and $\phi_0\ge 0$. Moreover $u_0\neq 0$ by \eqref{eq:str} and the following result
		\begin{proposition}
			There exists $C>0$ such that $\|u_n\|_p\ge C$ uniformly.
		\end{proposition}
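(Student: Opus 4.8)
The goal is to show that the $L^p$ norms of the approximating solutions $u_n$ do not collapse to zero. The natural strategy is to exploit the variational characterization of $u_n$ as a mountain pass critical point of $J_n$ together with the boundedness already established in Proposition \ref{pr:bound}. The plan is to argue by contradiction: suppose $\|u_n\|_p\to 0$ along a subsequence.

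\medskip
First I would write down the equation satisfied by $u_n$ tested against $u_n$ itself, namely the Nehari-type identity
\begin{equation*}
\irt |\n u_n|^2\, dx+\irt \eps_n u_n^2\, dx+\irt e(2\o-e\phi_n)\phi_n u_n^2\, dx=\irt |u_n|^p\, dx.
\end{equation*}
Since $0\le e\phi_n\le \o$ on the support of $u_n$ (property 2 of $\phi_u$), the coupling term $e(2\o-e\phi_n)\phi_n u_n^2$ is nonnegative, so the left-hand side controls $\|\n u_n\|_2^2$ from above by $\|u_n\|_p^p$. Next I would invoke Proposition \ref{pr:lp}: because $p\in(3,6)\subset(\tfrac{18}{7},6]$ and each $u_n\in\mathcal B_M$, the $L^p$ norm is controlled by $\|\n u_n\|_2^2$ together with the weighted double-integral quantity, and by Lemma \ref{le:due} the latter is bounded by $\irt\phi_n u_n^2\,dx$, which is uniformly bounded by Proposition \ref{pr:bound}. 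The key point to extract is therefore an inequality of the shape $\|u_n\|_p^{\,?}\le C(\|\n u_n\|_2^2+\text{bounded})$ combined with $\|\n u_n\|_2^2\le \|u_n\|_p^p$, which should force a lower bound once one checks the exponents.

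\medskip
The cleanest way to close the argument is to use the mountain pass level. Since $(c_n)_n$ is bounded above (as noted in the proof of Proposition \ref{pr:bound}) and, as is standard, bounded below away from zero uniformly in $n$ for small $\eps_n$, and since $J_n(u_n)\le c_n$ while $\langle J_n'(u_n),u_n\rangle=0$, I would form the combination $J_n(u_n)-\tfrac 1p\langle J_n'(u_n),u_n\rangle$. The cubic coupling term and the $L^p$ term combine so that this quantity is bounded above by a fixed multiple of $\|u_n\|_p^p$ plus lower-order controlled terms; hence $\|u_n\|_p^p\ge c_n/C\ge C'>0$, giving the desired uniform lower bound $\|u_n\|_p\ge C$.

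\medskip
The main obstacle I anticipate is the bookkeeping of the coupling term $e(2\o-e\phi_n)\phi_n u_n^2$ inside $J_n$ versus inside $\langle J_n'(u_n),u_n\rangle$: because $\phi_n$ depends on $u_n$, this term is not homogeneous and one must use property 2 ($e\phi_n\le\o$) to sign it correctly and to keep it from spoiling the lower bound on the mountain pass level. A secondary technical point is verifying that $c_n$ stays bounded below by a positive constant independent of $n$, which requires a uniform mountain pass geometry estimate for $J_n$ as $\eps_n\to 0$; here one uses Proposition \ref{pr:lp} to ensure the quadratic-plus-coupling part dominates near the origin so that the passes over a uniform barrier.
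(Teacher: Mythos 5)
There is a genuine gap: both of your routes stall at the same point, and the idea that closes it in the paper --- a rescaling argument --- is missing from your proposal. Follow your first route to its end. Proposition \ref{pr:lp} together with Lemma \ref{le:due} gives $\|u_n\|_p^2\le C\bigl(\|\n u_n\|_2^2+(\irt\phi_nu_n^2\,dx)^{1/2}\bigr)$, and the Nehari identity gives $\|\n u_n\|_2^2+\irt e\o\phi_nu_n^2\,dx\le\|u_n\|_p^p$. If you simply replace the coupling term by its uniform bound from Proposition \ref{pr:bound}, you get $\|u_n\|_p^2\le C(\|u_n\|_p^p+\mathrm{const})$, which is vacuous as $\|u_n\|_p\to 0$. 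If instead you feed the Nehari identity back in, you get $\|u_n\|_p^2\le C\bigl(\|u_n\|_p^p+\|u_n\|_p^{p/2}\bigr)$, and for small norms the dominant term is $\|u_n\|_p^{p/2}$, so a contradiction requires $p/2>2$, i.e.\ $p>4$. The square root sitting on the double-integral term in Proposition \ref{pr:lp} is exactly what prevents the exponents from closing on the full range $p\in(3,6)$. Your second route inherits the same obstruction: a uniform positive lower barrier for the mountain pass geometry of $J_n$ as $\eps_n\to 0$ would again require $\|u\|_p^p$ to be superquadratically controlled by the coercive part $\|\n u\|_2^2+\irt e\o\phi_uu^2\,dx$, which via Proposition \ref{pr:lp} fails for $p\le 4$ for the same reason; moreover the paper only guarantees $J_n(u_n)\le c_n$, so a lower bound on $c_n$ would not by itself bound $J_n(u_n)$ from below.

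The paper's proof removes the additive term and the bad square root by exploiting the scaling $u_n^t(x)=t^2u_n(tx)$, under which $\irt|u_n^t|^p\,dx=t^{2p-3}\irt|u_n|^p\,dx$ and the quantity $M[u]=\irt|\n u|^2\,dx+\int\!\!\int u^2(r)ru^2(s)s\min(r,s)\,ds\,dr$ scales like $t^3$. Choosing $t_n=(M[u_n])^{-1/3}$ normalizes $M[u_n^{t_n}]=1$, where $N[u_n^{t_n}]$ is bounded by an absolute constant, and unwinding the scaling yields the \emph{homogeneous} superlinear bound
\begin{equation*}
\irt|u_n|^p\,dx\le C\,(M[u_n])^{\frac{2p-3}{3}},\qquad \tfrac{2p-3}{3}>1\ \text{for }p>3,
\end{equation*}
with no additive constant. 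Combined with Lemma \ref{le:due} and the Nehari identity this gives $X_n\le CX_n^{(2p-3)/3}$ for $X_n=\irt|\n u_n|^2\,dx+\irt\phi_nu_n^2\,dx$, forcing $X_n$, and hence $\|u_n\|_p^p$, to be bounded below. Without this rescaling step your argument only covers $p>4$.
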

			\begin{proof}
				Here we will follow an idea in \cite{IR}. Set $R_0> \max (1, C_1M^2)$, where $C_1>0$ is the same as in Lemma \ref {le:due}, and define 
					\begin{equation*}
						M[u]=\irt |\n u|^2\, dx + \int_{R_0}^{+\infty}\left(\int_{R_0}^{+\infty}u^2(r)ru^2(s)s\min(r,s)\, ds\right)dr
					\end{equation*}
				and
					\begin{equation*}
						N[u]=\left[\irt |\n u|^2\, dx+\left(\int_{R_0}^{+\infty}\left(\int_{R_0}^{+\infty}u^2(r)ru^2(s)s\min(r,s)\,ds\right)dr\right)^{\frac 12}\right]^{\frac 12}.
					\end{equation*}
				Take $C>0$ as in Proposition \ref{pr:lp} in correspondence of $p$ and  $R_0$ and call $u_n^t(x)=t^2u_n(tx)$ for all $t>0$. We have that
					\begin{equation}\label{eq:comp}
						\irt |u_n|^p\,dx= t^{3-2p}\irt |u_n^t|^p\,dx\le C t^{3-2p}N[u^t_n].
					\end{equation} 
				Now, for any $n\ge 1$, set $t_n=(M[u_n])^{-\frac 13}$ so that $M[u_n^{t_n}]=t_n^3 M[u_n]=1$.
				Since for every $v:\RT\to \R$ measurable we have that $M[v]\le1$ implies $\frac 12 (N[v])^4\le M[v]$, 
				from \eqref{eq:comp} we deduce
					\begin{equation}\label{eq:comp2}
						\irt |u_n|^p\,dx\le C t_n^{3-2p}N[u^{t_n}_n]\le C \sqrt[4]2(M[u_n])^{\frac{2p-3}3}.
					\end{equation} 
				Now, by our choice of $R_0$ and the fact that $u_n\in\mathcal B_M$ for every $n\ge 1$, by Lemma \ref{le:due} and \eqref{eq:comp2} we infer that for every $n\ge 1$
					\begin{equation*}
						\irt |u_n|^p\, dx \le C \left(\irt |\n u_n|^2\, dx + \irt \phi_nu_n^2\, dx\right)^{\frac{2p-3}{3}}.
					\end{equation*}
				Since the functions $u_n$ satisfy the Nehari identity, we have
					\begin{align*}
					\irt [|\n u_n|^2+e\o\phi_n
					u_n^2]\,dx &\le \irt [|\n u|^2+\eps_n u_n^2+e(2\o-e\phi_n(x) )\phi_n
					u_n^2]\,dx\\
					&= \irt
					|u_n|^p\, dx \le C \left(\irt |\n u_n|^2\, dx + \irt \phi_nu_n^2\, dx\right)^{\frac{2p-3}{3}}.
					\end{align*}
				The conclusion follows recalling the fact that $p>3$.
			\end{proof}
	
		\begin{proposition}\label{pr:dist}
		The couple $(u_0,\phi_0)$ solves \eqref{main eq} in the sense of distributions, namely
			\begin{align*}
				&\irt(\n u_0\n\psi +e(2\o-e\phi_0)\phi_0u_0\psi-u_0^{p-1}\psi)\,dx=0,
				\\
				&\irt\n \phi_0\n\psi\, dx=\irt e(\o-e\phi_0) u_0^2\psi\,dx,
			\end{align*}
		for all $\psi\in C^{\infty}_0(\RT)$.
		\end{proposition}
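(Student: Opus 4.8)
The plan is to pass to the limit as $n\to\infty$ in the weak formulation of \eqref{eqeps}. For every $\psi\in C_0^\infty(\RT)$ the pair $(u_n,\phi_n)$ satisfies
\begin{align*}
&\irt\big(\n u_n\n\psi+\eps_n u_n\psi+e(2\o-e\phi_n)\phi_n u_n\psi-u_n^{p-1}\psi\big)\,dx=0,\\
&\irt\n\phi_n\n\psi\,dx=\irt e(\o-e\phi_n)u_n^2\psi\,dx,
\end{align*}
and I would treat each term separately. Besides \eqref{eq:weak}, \eqref{eq:str} and \eqref{eq:phin}, the facts I need to record are the following. Since $\Dr$ is a closed subspace of $\D$, weak convergence in $\Dr$ upgrades to weak convergence in $\D$ (project any test direction orthogonally onto the radial subspace), so $u_n\weakto u_0$ and $\phi_n\weakto\phi_0$ in $\D$ and, by the embedding $\D\hookrightarrow L^6(\RT)$, also weakly in $L^6(\RT)$; this lets me test against arbitrary (possibly non-radial) $\psi$. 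Moreover, by the Rellich theorem $\phi_n\to\phi_0$ strongly in $L^q(\supp\psi)$ for every $q<6$ (and, up to a subsequence, a.e.). These are the only ingredients required.

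The linear and superlinear terms are immediate. The two gradient terms pass to the limit because $u_n\weakto u_0$ and $\phi_n\weakto\phi_0$ weakly in $\D$ and $\psi\in\D$. The term $\irt\eps_n u_n\psi\,dx$ tends to $0$, since $|\irt\eps_n u_n\psi\,dx|\le\eps_n\|u_n\|_{L^2(\supp\psi)}\|\psi\|_2$, and $\|u_n\|_{L^2(\supp\psi)}$ is bounded by the $L^6$ bound on a set of finite measure while $\eps_n\to0$. Finally, as $u_n\to u_0$ in $L^p(\RT)$ by \eqref{eq:str}, the Nemytskii operator $u\mapsto u^{p-1}$ is continuous from $L^p$ to $L^{\frac{p}{p-1}}$, so $u_n^{p-1}\to u_0^{p-1}$ in $L^{\frac{p}{p-1}}(\RT)$ and $\irt u_n^{p-1}\psi\,dx\to\irt u_0^{p-1}\psi\,dx$.

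The real work is the convergence of the coupling terms, which I expand as $e(2\o-e\phi_n)\phi_n u_n=2e\o\,\phi_n u_n-e^2\phi_n^2 u_n$ and $e(\o-e\phi_n)u_n^2=e\o\,u_n^2-e^2\phi_n u_n^2$. For the products linear in $\phi_n$ I would use a weak$\times$strong argument: $\phi_n\weakto\phi_0$ in $L^6(\RT)$, while $u_n\psi\to u_0\psi$ and $u_n^2\psi\to u_0^2\psi$ strongly in $L^{6/5}(\RT)$ (here I use that $\psi$ has compact support, so only the local strong convergence of $u_n$ in $L^q$, $q\in(\frac{18}{7},6)$, together with the uniform $L^6$ bound is needed). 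This yields $\irt\phi_n u_n\psi\,dx\to\irt\phi_0 u_0\psi\,dx$ and $\irt\phi_n u_n^2\psi\,dx\to\irt\phi_0 u_0^2\psi\,dx$; likewise $\irt u_n^2\psi\,dx\to\irt u_0^2\psi\,dx$ by strong $L^2_{\mathrm{loc}}$ convergence. The genuinely quadratic-in-$\phi_n$ term $\irt\phi_n^2 u_n\psi\,dx$ is the main obstacle, since weak convergence of $\phi_n$ no longer suffices; here I would exploit the strong local convergence from Rellich. Writing $\phi_n^2 u_n-\phi_0^2 u_0=(\phi_n-\phi_0)(\phi_n+\phi_0)u_n+\phi_0^2(u_n-u_0)$ and applying H\"older on $\supp\psi$ with the uniform $L^6$ bounds (the first piece estimated by $\|\phi_n-\phi_0\|_{L^{3/2}(\supp\psi)}\|\phi_n+\phi_0\|_6\|u_n\|_6$ and the second by $\|\phi_0^2\|_{L^3(\supp\psi)}\|u_n-u_0\|_{L^{3/2}(\supp\psi)}$), both terms tend to $0$, so $\phi_n^2 u_n\to\phi_0^2 u_0$ in $L^1(\supp\psi)$; alternatively, a.e. convergence together with the pointwise bound $e\phi_n\le\o$ on $\{u_n\neq0\}$ gives a generalized dominated-convergence argument. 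Collecting all the limits produces the two distributional identities, and since $u_0\ge0$, $\phi_0\ge0$ and $u_0\neq0$ have already been established, $(u_0,\phi_0)$ is the desired distributional solution of \eqref{main eq}.
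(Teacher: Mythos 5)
Your argument is correct and is exactly the standard passage to the limit that the paper has in mind: the paper omits this proof, referring to \cite[Theorem 1.2]{APP}, and your weak-times-strong treatment of each term (weak convergence of the gradients, local Rellich compactness for $\phi_n$, the strong $L^q_{\mathrm{loc}}$ convergence of $u_n$ on $\supp\psi$, and the decomposition of $\phi_n^2u_n-\phi_0^2u_0$) supplies precisely the details being delegated. No gaps.
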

		Taking into account \eqref{eq:weak}, \eqref{eq:str} and \eqref{eq:phin}, the proof is definitely similar to that of \cite[Theorem 1.2]{APP}, so we omit it. Moreover, since for all $q\in\left(\frac{18}7, 6\right)$ we have $u_0\in\D\cap L^{q}(\RT)$,  a direct application of Holder inequality and standard density arguments show the following
		\begin{corollary}\label{cor:weak}
			For all $v\in\D\cap L^h(\RT)$ and $w\in\D\cap L^k(\RT)$ with $(h,k)\in\left[1,\frac{9}{4}\right)\times \left[1,\frac 92\right)$
				\begin{align*}
				&\irt(\n u_0\n v +e(2\o-e\phi_0)\phi_0u_0 v-u_0^{p-1} v)\,dx=0,
				\\
				&\irt\n \phi_0\n w\, dx=\irt e(\o-e\phi_0) u_0^2 w\,dx.
				\end{align*}
		\end{corollary}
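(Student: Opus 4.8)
The two identities are already available for test functions in $C^{\infty}_0(\RT)$ by Proposition \ref{pr:dist}, so the plan is to extend them to the larger classes by a continuity-plus-density argument. Concretely, I would fix $v\in\D\cap L^h(\RT)$, choose a sequence $\psi_n\in C^{\infty}_0(\RT)$ converging to $v$ in the norm $\|\n\cdot\|_2+\|\cdot\|_h$, write the first identity of Proposition \ref{pr:dist} with $\psi=\psi_n$, and let $n\to\infty$. The only thing to check is that each of the four terms is continuous with respect to this topology; the second identity is then handled identically, with $w$ and $k$ in place of $v$ and $h$.

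The integrability input is the following. Since $u_0\in\D\cap L^q(\RT)$ for every $q\in(\tfrac{18}7,6)$ and $\D\hookrightarrow L^6(\RT)$, interpolation gives $u_0\in L^s(\RT)$ for all $s\in(\tfrac{18}7,6]$, while $\phi_0\in\D\hookrightarrow L^6(\RT)$. On the side of the test function, convergence $\psi_n\to v$ in $\D\cap L^h(\RT)$ forces $\n\psi_n\to\n v$ in $L^2(\RT)$ and, because $\D\hookrightarrow L^6(\RT)$, also $\psi_n\to v$ in $L^6(\RT)$; interpolating against the $L^h$-convergence, $\psi_n\to v$ in every $L^t(\RT)$ with $t\in[h,6]$. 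So each term of the form $\irt F\,\psi_n$ passes to the limit as soon as $F\in L^{t'}(\RT)$ for some $t\in[h,6]$.

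With these facts the estimates are routine H\"older bookkeeping, and the two endpoints emerge from the mixed terms. In the first equation the gradient pairing is continuous by Cauchy--Schwarz; the term $\phi_0^2u_0v$ is harmless since $\phi_0^2u_0\in L^r(\RT)$ with $r\in(\tfrac{18}{13},2]$ (dual exponents up to $\tfrac{18}5$), and the nonlinear term $u_0^{p-1}v$ imposes no binding restriction because $p<6$ leaves a comfortable range of admissible exponents. The borderline term is the linear one $\phi_0u_0v$: writing $\tfrac1r=\tfrac16+\tfrac1s$ with $s\in(\tfrac{18}7,6]$ gives $\phi_0u_0\in L^r(\RT)$ for $r\in(\tfrac95,3]$, so the reachable dual exponents $t=r'$ fill $[\tfrac32,\tfrac94)$, and such a $t$ lies in $[h,6]$ precisely when $h<\tfrac94$. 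In the second equation the only restrictive term is $u_0^2w$: here $u_0^2\in L^r(\RT)$ for $r\in(\tfrac97,3]$, whose dual exponents fill $[\tfrac32,\tfrac92)$, forcing $k<\tfrac92$, whereas $\phi_0u_0^2w$ and the gradient term give nothing sharper. Thus both endpoints $\tfrac94$ and $\tfrac92$ are dictated by pairing $u_0$ (respectively $u_0^2$) at its \emph{worst} integrability $L^{18/7}(\RT)$ against $\phi_0\in L^6(\RT)$ (respectively against itself).

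The one non-computational point, and the step I would treat most carefully, is the density of $C^{\infty}_0(\RT)$ in $\D\cap L^h(\RT)$ for the norm $\|\n\cdot\|_2+\|\cdot\|_h$ (and in $\D\cap L^k(\RT)$): given $v$, one truncates to obtain compact support while keeping simultaneous control of the gradient in $L^2$ and of the function in $L^h$, and then mollifies, checking convergence in both pieces of the norm at once. Granted this and the term-by-term continuity above, passing to the limit in the identities of Proposition \ref{pr:dist} yields the stated weak formulations for all admissible $v$ and $w$, which is exactly the content of the corollary.
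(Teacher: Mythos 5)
Your argument is exactly the one the paper intends: the corollary is stated there with only the remark that it follows from ``a direct application of Holder inequality and standard density arguments'', and your term-by-term H\"older bookkeeping (with the binding exponents $\tfrac94$ and $\tfrac92$ coming from pairing $\phi_0u_0$ and $u_0^2$ at the worst integrability $u_0\in L^{q}$, $q\downarrow\tfrac{18}7$) together with truncation-plus-mollification density in $\D\cap L^h(\RT)$ is a correct filling-in of those details. No discrepancy with the paper's approach.
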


	We emphasize the fact that, until this step, we can not establish a stronger form of relationship between the couple $(u_0,\phi_0)$ and the equations in \eqref{main eq}. 
	
	In particular, the possibility that our particle possesses infinite energy should compromise our theory making it inconsistent with any physical purpose. To this end, first we prove the following generalization of the Strauss' radial Lemma \cite{strauss}
	
	\begin{lemma}\label{le:Str}
		Let $N\ge 3$ and  $q\in [2,2N).$ Then there exists $C>0$ such that for any $u\in\mathcal D^{1,2}(\RN)\cap L^{q}(\RN)$ and $|x|>1$ we have
		\begin{equation*}
		|u(x)| \le C\frac{\|\n u\|_2 + \|u\|_q}{|x|^{\frac{2N-q}{2q}}}.
		\end{equation*}
		Moreover $u$ is almost everywhere equal to a continuos function in $\RN\setminus\{0\}$. 
	\end{lemma}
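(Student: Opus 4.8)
As in the classical Strauss lemma it generalises, this is in fact a statement about radially symmetric functions, and that is the only form in which it can be used below (all the functions $u_n,u_0$ to which it is applied lie in $\Dr$). For genuinely non-radial $u$ the pointwise bound, and even the continuity away from the origin, must fail: fixing $x_0$ with $|x_0|>1$ and a small $\gamma\in\big(0,\tfrac{N-2}{2}\big)$, the cut-off spike $u(x)=\chi(x)|x-x_0|^{-\gamma}$, with $\chi\in C_c^\infty(\RN)$ equal to $1$ near $x_0$, lies in $\mathcal D^{1,2}(\RN)\cap L^q(\RN)$ for the given $q<2N$ (the singularity being integrable for all the relevant norms) yet blows up at $x_0$. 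The plan is therefore to prove the estimate for radial $u$, where the spherical structure can be exploited. Set $\alpha=\tfrac{2N-q}{2q}=\tfrac Nq-\tfrac12$.

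First I would reduce to a one–dimensional problem. Writing $u=u(r)$, $r=|x|$, one has $\|\nabla u\|_2^2=\omega_{N-1}\int_0^\infty|u'|^2 r^{N-1}\,dr$ and $\|u\|_q^q=\omega_{N-1}\int_0^\infty|u|^q r^{N-1}\,dr$. A weighted Cauchy--Schwarz estimate of $\int_{r_1}^{r_2}|u'|\,ds$ shows that $u(r)$ has a finite limit as $r\to\infty$, necessarily $0$ since $u\in L^{2^*}(\RN)$; hence $|u(r)|\le\int_r^\infty|u'(s)|\,ds$. Replacing $u$ by $|u|$ (same space, same gradient norm) I may assume $u\ge 0$.

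I would then split the range of $q$ at $\tfrac{2N}{N-1}$. For $q\ge\tfrac{2N}{N-1}$ the gradient alone suffices:
\[
|u(r)|\le\int_r^\infty|u'(s)|\,s^{\frac{N-1}2}s^{-\frac{N-1}2}\,ds\le C\,\|\nabla u\|_2\Big(\int_r^\infty s^{-(N-1)}\,ds\Big)^{1/2}=C\,\|\nabla u\|_2\,r^{-\frac{N-2}2},
\]
and $q\ge\tfrac{2N}{N-1}$ is exactly the condition $\tfrac{N-2}2\ge\alpha$, so for $r>1$ one has $r^{-\frac{N-2}2}\le r^{-\alpha}$. For $q\in\big[2,\tfrac{2N}{N-1}\big)$ the mass must be fed in: with $a:=\tfrac{2N}q-(N-2)\in(1,2]$,
\[
u(r)^a=-a\int_r^\infty u^{a-1}u'\,ds\le a\int_r^\infty u^{a-1}|u'|\,ds,
\]
and I apply Hölder with exponents $\tfrac q{a-1}$, $2$ and their conjugate $\big(\tfrac12-\tfrac{a-1}q\big)^{-1}$, pairing $u^{a-1}$ with the weight $s^{(N-1)/q}$, pairing $|u'|$ with $s^{(N-1)/2}$, and leaving a pure power of $s$ for the third factor. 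The first two factors reproduce $\|u\|_q^{a-1}$ and $\|\nabla u\|_2$; the conjugate exponent is finite precisely on this range of $q$, and a direct computation of the remaining $s$–integral over $(r,\infty)$ produces the factor $r^{-a\alpha}$. Taking $a$–th roots and using $X^{(a-1)/a}Y^{1/a}\le X+Y$ gives $|u(r)|\le C(\|\nabla u\|_2+\|u\|_q)\,r^{-\alpha}$.

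The continuity assertion follows from the same reduction: on every $[a,b]\subset(0,\infty)$ one has $\int_a^b|u'|^2\,dr\le a^{-(N-1)}\int_0^\infty|u'|^2r^{N-1}\,dr<\infty$, so $u\in H^1(a,b)\hookrightarrow C^0([a,b])$ and $u$ agrees a.e.\ with a function continuous on $\RN\setminus\{0\}$, whence the pointwise bound holds at every $x$ with $|x|>1$. The routine points requiring care are the bookkeeping of the three Hölder exponents and the verification that the two regimes tile $[2,2N)$ with admissible exponents (the endpoints $q=2$ and $q=\tfrac{2N}{N-1}$ being the degenerate two–factor and $a=1$ cases). The genuine conceptual obstacle, however, is the first one: recognising that radial symmetry is indispensable here, the pointwise statement being false without it.
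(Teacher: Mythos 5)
Your proposal is correct, but it follows a genuinely different route from the paper's. The paper imitates Strauss \cite{strauss} directly: for smooth $u$ it derives the single inequality $r^{N-1}u^2(r)\le\int_0^r[(u'(s))^2+u^2(s)]s^{N-1}\,ds+mr^{N-2}u^2(r)$ with $m=\frac{N-1}{2}$, replaces the zeroth-order term by $r^{\frac{N(q-2)}{q}}\|u\|_q^2$ via one two-exponent H\"older inequality (exponents $\frac q2$ and $\frac q{q-2}$), absorbs the boundary term to get $\left(1-\frac mr\right)u^2(r)\le \frac{C}{r^{\frac{2N-q}{q}}}\big(\|\n u\|_2+\|u\|_q\big)^2$, and concludes by density --- one computation, uniform in $q\in[2,2N)$, no case split. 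You instead integrate from infinity and split at $q=\frac{2N}{N-1}$: above the threshold you invoke only the $\mathcal D^{1,2}$ radial decay (i.e.\ \cite[Radial Lemma A.III]{BL}), below it you use the power trick $u^a$ with $a=\frac{2N}{q}-(N-2)\in(1,2]$ and a three-factor H\"older; I have checked your exponent bookkeeping (the weight integral converges for $N\ge3$ since $(N-1)\big(\frac{a-1}{q}+\frac12\big)>\frac12-\frac{a-1}{q}$, the resulting power is exactly $-a\alpha$ precisely because of your choice of $a$, and the endpoint $q=2$, where $p_3=\infty$, reproduces the classical bound $u^2(r)\lesssim r^{-(N-1)}\|u\|_2\|\n u\|_2$), and it is all consistent. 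As to what each buys: the paper's argument is shorter and case-free, while yours avoids the absorption prefactor $\left(1-\frac mr\right)$, which vanishes as $r\downarrow m$ --- so the paper's display yields a uniform constant only for $r$ bounded away from $\frac{N-1}{2}$ (and for $N\ge4$ says nothing on $1<r\le m$ without a small extra step), whereas your two-regime argument covers the stated range $|x|>1$ for every $N\ge3$ directly, the low-$q$ estimate in fact holding for all $r>0$. Your preliminary observation is also a genuine service rather than a digression: the lemma as printed omits the radiality hypothesis, your spike $\chi(x)|x-x_0|^{-\gamma}$ (take $\gamma<\min\{\frac{N-2}{2},\frac Nq\}$ so all norms are finite) shows both the decay estimate and the continuity claim fail for general $u$, and indeed the paper's proof silently treats $u$ as a function of $r$ and applies the lemma only to elements of $\Dr$. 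The one housekeeping point you should make explicit is the chain rule $\frac{d}{dr}u^a=au^{a-1}u'$ for the merely weakly differentiable $|u|$, which is justified either by your local $H^1\hookrightarrow C^0$ observation away from the origin or, as the paper does, by proving everything for radial $C_0^\infty$ functions and finishing by density.
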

	\begin{proof}
		Set $u\in C_0^{\infty}(\RT)$ and consider $N\ge 3$. As in \cite{strauss}, we	obtain the inequality
		\begin{equation*}
		r^{N-1}u^2(r)\le \int_0^r [(u'(s))^2+u^2(s)]s^{N-1}\,ds + m r^{N-2}u^2(r)
		\end{equation*}
		where $m=\frac{N-1}2$.
		Now we proceed with the following estimate 
		\begin{align*}
		\int_0^r u^2(s) s^{N-1}\, ds &= \int_0^r u^2(s) s^{\frac{2(N-1)}{q}}s^{\frac{(N-1)(q-2)}q}\,ds\\
		&\le \left(\int_0^r |u(s)|^q s^{N-1}\, ds \right)^{\frac 2 q}\left(\int_0^r s^{N-1}\, ds\right)^{\frac{q-2}q},
		\end{align*}
		and then, comparing the two inequalities, we arrive to
		\begin{equation*}
		r^{N-1}u^2(r)\le C\left(\|\n u\|_2^2+ r^{\frac{N(q-2)}q}\|u\|_q^2\right)+mr^{N-2}u^2(r),
		\end{equation*}
		that is 
		\begin{equation*}
		\left(1-\frac m r \right)u^2(r)\le \frac C{r^\frac{2N-q}q} \big(\|\n u\|_2+\|u\|_q\big)^2,
		\end{equation*}
		corresponding to our estimate. We conclude by density arguments.
	\end{proof}

	Now we can prove the following integrability result.
	
	\begin{proposition}\label{pr:crucial}
		The function $u_0$ is in $L^2(\RT)$.
	\end{proposition}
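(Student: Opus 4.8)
The plan is to upgrade the purely distributional information on $u_0$ into a pointwise decay estimate at infinity fast enough to force $\irt u_0^2\,dx<\infty$; near the origin there is nothing to prove, since $u_0\in\D$ is locally square integrable. By Proposition \ref{pr:dist} the limit $u_0$ solves
\[
-\Delta u_0+V u_0=u_0^{p-1},\qquad V:=e(2\o-e\phi_0)\phi_0\ge 0,
\]
with $0\le e\phi_0\le\o$, and by Lemma \ref{le:Str} (with $N=3$ and some $q\in(\tfrac{18}7,6)$) $u_0$ coincides a.e.\ with a function that is continuous on $\RT\setminus\{0\}$ and tends to $0$ at infinity with a definite polynomial rate.

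First I would record two quantitative facts. (i) A Coulomb lower bound: since $\phi_0$ is the Newtonian potential of the nonnegative source $e(\o-e\phi_0)u_0^2$ and $u_0\not\equiv 0$ (hence $u_0>0$ by the strong maximum principle), this source carries strictly positive mass, so $\phi_0(x)\ge c/|x|$ and therefore $V(x)\ge e\o\,\phi_0(x)\ge c'/|x|$ for $|x|\ge 1$, with $c,c'>0$. (ii) A strong weighted bound: each $u_n$ lies in $\mathcal B_M$ and, by Proposition \ref{pr:bound}, the right-hand side of Proposition \ref{pr:est} is bounded uniformly in $n$; passing to the limit by Fatou gives, for every $\alpha>\tfrac12$,
\[
\irt\frac{u_0^2(x)}{\sqrt{|x|}\,(1+|\log|x||)^\alpha}\,dx<\infty,
\]
which in radial form controls $\int_1^{+\infty}u_0^2(r)\,r^{3/2}(1+\log r)^{-\alpha}\,dr$.

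Next I would convert (ii) into improved pointwise decay and then run a comparison argument. Writing $u_0^2(r)=-2\int_r^{+\infty}u_0u_0'\,ds$ and applying Cauchy--Schwarz against the weighted quantity in (ii) and against $\|\n u_0\|_2^2$ (using $s^{-3/2}\le r^{-7/2}s^2$ for $s\ge r$) produces a decay rate strictly better than the bare Strauss exponent. The purpose is to make the nonlinearity negligible against the potential at infinity: recasting the equation as the linear Schr\"odinger equation $-\Delta u_0+Wu_0=0$ with $W:=V-u_0^{p-2}$, the improved decay together with (i) yields $W(x)\ge c'/(2|x|)>0$ for $|x|\ge R_1$. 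A direct computation then shows that $\bar u(x)=Ae^{-\kappa\sqrt{|x|}}$ is a supersolution of $-\Delta w+\tfrac{c'}{2|x|}w=0$ at large $|x|$ whenever $\kappa<\sqrt{2c'}$; choosing $A$ so that $\bar u\ge u_0$ on $\{|x|=R_1\}$ and invoking the maximum principle for $-\Delta+W$ (with $W\ge0$) on the exterior domain gives $u_0(x)\le Ae^{-\kappa\sqrt{|x|}}$. Such sub-exponential decay immediately yields $u_0\in L^2(\RT)$, indeed $u_0\in L^r(\RT)$ for every $r\ge 1$.

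The main obstacle is precisely the step that absorbs the nonlinearity into the potential, i.e.\ verifying $u_0^{p-2}\le V/2\sim|x|^{-1}$ for large $|x|$. Because the limit problem \eqref{main eq} carries no constant ``mass'' term (this is exactly the case $m=\o$), $V$ only decays like a Coulomb potential instead of staying bounded below, so this inequality is genuinely delicate when $p$ is close to $3$: the decay coming from (ii) beats $|x|^{-1/(p-2)}$ only for $p$ bounded away from $3$. For $p$ near $3$ one must iterate the bootstrap, alternating the weighted estimate, elliptic regularity, and the comparison, so as to push the polynomial decay of $u_0$ past the threshold $1/(p-2)$ before the sub-exponential comparison can be launched. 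Making these estimates effective is most naturally done on the approximating solutions $u_n$ of \eqref{eqeps}, which for each fixed $\eps_n>0$ already decay exponentially and for which the Coulomb lower bound on $\phi_n$ holds uniformly in $n$, so that a uniform sub-exponential bound survives the passage to the limit.
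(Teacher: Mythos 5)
Your overall skeleton --- a Coulomb lower bound $\phi_0(x)\ge c/|x|$ at infinity, absorption of the nonlinearity into the potential, and comparison with a sub-exponentially decaying supersolution of $-\Delta w+\tfrac{\gamma}{|x|}w=0$ --- is the same as the paper's, and your steps (i), (ii) and the final comparison are sound. The genuine gap is exactly the step you flag yourself: verifying $u_0^{p-2}\le V/2\sim |x|^{-1}$ for large $|x|$. The pointwise decay extractable from the weighted estimate (roughly $u_0(r)\lesssim r^{-7/8}$ up to logarithms) beats $r^{-1/(p-2)}$ only for $p$ bounded away from $3$ (about $p\ge 22/7$), and the ``iterated bootstrap'' you invoke for smaller $p$ is not carried out and is circular as stated: to improve the decay of $u_0$ by comparison you already need $W=V-u_0^{p-2}\ge 0$ on an exterior domain, which is precisely what the bootstrap is supposed to produce. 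The paper closes this gap with an idea your proposal is missing: a \emph{first} comparison between the two equations of the system. Subtracting the $\phi$-equation from the $u$-equation and testing with truncations of $(u_0-\phi_0)_+$ on $\{|x|\ge \bar R\}$ gives
$\int_{|x|\ge\bar R}|\nabla(u_0-\phi_0)_+|^2\,dx\le \int_{|x|\ge\bar R}\bigl[-e(\omega-e\phi_0)u_0^2+u_0^{p-1}\bigr](u_0-\phi_0)_+\,dx$, and the right-hand side is nonpositive because $u_0^{p-3}<e(\omega-e\phi_0)$ eventually --- a condition that only needs $u_0(x)\to 0$ and $p>3$, with no rate whatsoever. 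Hence $u_0\le\phi_0$ at infinity, and then $u_0^{p-2}=u_0^{p-3}\,u_0\le\tfrac{e\omega}{2}\,\phi_0\le\tfrac12 V$ for $|x|$ large, uniformly in $p\in(3,6)$. This bypasses any comparison of decay rates and is the ingredient you would need to make your argument work for all $p\in(3,6)$.

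Two further, more minor points. First, under the contradiction hypothesis $\|u_0\|_2=+\infty$ the functions $(u_0-\phi_0)_+$ and $(u_0-v)_+$ are not a priori admissible test functions for the first equation (Corollary \ref{cor:weak} only allows exponents $h<\tfrac94$), so your appeal to ``the maximum principle for $-\Delta+W$ on the exterior domain'' hides a real technical step; the paper handles it by multiplying by compactly supported cut-offs $k_M\varphi$ and passing to the limit $M\to+\infty$. Second, you are right that the Coulomb lower bound on $\phi_0$ needs only $u_0\not\equiv 0$ rather than the contradiction hypothesis; the paper's proof-by-contradiction structure is used only to convert the final sub-exponential bound $u_0\le v$ into a contradiction with $\|u_0\|_2=+\infty$, so that organizational difference is inessential.
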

	\begin{proof}
		In this proof we combine ideas in \cite{IR} and \cite{AP} adapting them to our not trivial situation.
		
		By contradiction, assume that $\|u_0\|_2=+\infty$. By Proposition \ref{pr:bound} and \eqref{eq:phi}, we know that there exists $R_1>0$ such that for any $n\ge 1$ and $r>R_1$
		\begin{equation}\label{eq:lower}
		\phi_n(r)\ge \frac {e\o}{2}\int_{R_1}^{+\infty}\frac{s^2}{\max(r,s)}u_n^2(s)ds.
		\end{equation}
		Now, for every $r, s$ with $0<r<s$, we set $ A_r^s= B_s\setminus B_r$, where $B_r$ and $B_s$ are the balls centered in $0$ and with radius respectively $r$ and $s$.
		Since 
		\begin{equation*}
		u_n\to u_0 \hbox{ in }L^2(A_r^s)
		\end{equation*}
		for every $r < s$, we have that for any $K>0$ there exists $R_K>0$ for which 
		\begin{equation*}\label{eq:lim}
		\lim_n \|u_n\|_{L^2(A_{R_1}^{R_K})}>K.
		\end{equation*}
		Then by \eqref{eq:lower} we have that there exist three positive numbers $C$, $R_1$ and $R_2$  and $n_0\in \N$ such that $R_1<R_2$ and
		\begin{equation*}\label{eq:phinr}
		\phi_n(r)\ge \frac {e\o}{2}\int_{R_1}^{+\infty}\frac{s^2}{\max(r,s)}u_n^2(s)ds\ge \frac {e\o}{2r}\int_{R_1}^{R_2}s^2u_n^2(s)ds> \frac{C} r
		\end{equation*}
		for every $r>R_2$ and $n\ge n_0$. Since, up to a subsequence, $\phi_n\to\phi_0$ pointwise, we deduce that
		\begin{equation}\label{eq:phir}
			\phi_0(r)\ge \frac{C}{r},\hbox{ for }r>R_2
		\end{equation} 
		and, of course, $\o-e\phi_0\ge 0$.
		Since, exactly as in \cite[Theorem 6.1]{IR}, we have that for every $R>0$ there exists $\bar R>R$ such that $u_0(\bar R)\le \phi_0(\bar R)$, we can consider $\bar R > R_2$ such that the function $\varphi$ defined as follows
			\begin{equation*}    \label{varphi}
			\varphi(x)=\left\{
			\begin{array}{ll}
			0 & \hbox{if }|x|<\bar R,
			\\
			(u_0-\phi_0)_+& \hbox{if }|x|\ge \bar R
			\end{array}
			\right.
			\end{equation*}
		is in $\Dr \cap L^{q}(\RT)$ for any $q\in \left(\frac{18}{7}, 6\right]$, and by Lemma \ref{le:Str} and \cite[Radial Lemma A.III]{BL} we have 
			\begin{equation}\label{eq:neg}
				e(\o-e\phi_0) u_0^2 - u_0^{p-1}>0 \hbox{ in }[\bar R, +\infty[.
			\end{equation}
		Since $\frac 94<\frac{18}7<\frac 92$, by Corollary \ref{cor:weak} the function $\varphi$ is a test function for the second equation, but it is not for the first. Then we approximate it by means of a family of cut off functions in the following way.\\
		Define $k:\RT\to[0,1]$ as a smooth radial function, radially decreasing and such that $k\equiv 1$ in $|x|\le 1$ and $k\equiv 0$ in $|x|\ge 2.$
		For any $M>0$, define $v_M= k_M \varphi $, where $k_M(x)=k(x/M)$. Of course $v_M\ge 0$ and since $\supp v_M$ is compact and $\n \varphi_M\in L^2(\RT)$, we have that $\varphi_M$ is a test function for both the equations in the system. Moreover
			\begin{equation*}
				\varphi_M\to \varphi\hbox{ in }L^q \hbox{ for all } q\in \left(\frac{18}{7}, 6\right]
			\end{equation*}
		and, taken an arbitrary $h\in \left(\frac{18}{7}, 6\right)$,
			\begin{align*}
				\|\n \varphi-\n \varphi_M\|^2_2&\le C \int_{|x|\ge M}|\n \varphi|^2\,dx+ \frac C{M^2}\int_{A_M^{2M}} \varphi^2\, dx\\
				&\le o_M(1)+ \frac C{M^2}\|\varphi\|_h^2|A_M^{2M}|^{\frac{h-2}{h}}\\
				&\le o_M(1)+ \frac C{M^{\frac{6-h}h}}\|\varphi\|_h^2,
			\end{align*}
		and so
			\begin{equation*}
				\varphi_M\to \varphi \hbox{ in } \Dr.
			\end{equation*}
		By Corollary \ref{cor:weak},
		\begin{align*}
			&\irt (\n u_0\n \varphi_M +  e(2\o-e\phi_0)\phi_0 u_0\varphi_M- u_0^{p-1}\varphi_M)\, dx=0 ,
			\\
			&\irt\n \phi_0\n \varphi_M\, dx=\irt e(\o-e\phi_0) u_0^2\varphi_M\, dx
		\end{align*}
		so, comparing and using the fact that $(2\o-e\phi_0)\phi_0 u_0\varphi_M\ge 0$,
		\begin{align*}
			\irt \n (u_0-\phi_0)\n \varphi_M \, dx&=\irt [-   e(2\o-e\phi_0)\phi_0 u_0 -e(\o-e\phi_0) u_0^2 + u_0^{p-1}]\varphi_M\, dx\\
														  &\le\irt  [-e(\o-e\phi_0) u_0^2 + u_0^{p-1}]\varphi_M\, dx.
		\end{align*}
		Letting $M$ go to $+\infty$, by continuity we have
			\begin{align*}
				\irt \n(u_0-\phi_0)\n \varphi\, dx\le \irt  [-e(\o-e\phi_0) u_0^2 + u_0^{p-1}]\varphi\, dx.
			\end{align*}
		By definition of $\varphi$ and \eqref{eq:neg}, we deduce
			\begin{align*}
				\int_{|x|\ge \bar R}|\n (u_0-\phi_0)_+|^2\, dx&=\irt \n(u_0-\phi_0)\n \varphi\, dx\\
				&\le \irt  [-e(\o-e\phi_0) u_0^2 + u_0^{p-1}]\varphi\, dx\\
				&= \int_{|x|\ge \bar R}  [-e(\o-e\phi_0) u_0^2 + u_0^{p-1}](u_0-\phi_0)_+\, dx\le 0
			\end{align*}
		and then $u_0\le\phi_0$ in $(\bar R,+\infty)$.\\
		Now, possibly replacing $\bar R$ with a larger value, by Lemma \ref{le:Str} we can assume $u_0^{p-3}(r)<\frac{e\o}2$ in $(\bar R, +\infty)$, so that, by \eqref{eq:phir},
			\begin{equation*}
				 e(2\o-e\phi_0(r))\phi_0(r) - u_0^{p-2}(r)\ge e\o\phi_0(r) - \frac{e\o}{2}u_0(r)\ge \frac{e\o}2\phi_0(r)\ge \frac{e\o C}2\frac 1r
			\end{equation*}
		Take $\g\in \left(0,\frac{Ce\o}2\right)$ consider the problem
			\begin{equation*}
		\begin{cases}
		\dis -\Delta w 
		+ \frac{\g}{|x|} 
		w =
		0 &\hbox{ if } |x|>\bar R, 
		\\[3mm]
		w=u_0&\hbox{ if } |x|=\bar R,\\[3mm]
		w\to 0 &\hbox{ as }|x| \to +\infty
		\end{cases}
		\end{equation*}
		and let $v$ be a radial solution.
		Now we again use the comparison principle by approximation. Consider the function $\psi:\RT\to\R$ such that
					\begin{equation*}    \label{psi}
						\psi(x)=\left\{
						\begin{array}{ll}
						0 & \hbox{if }|x|<\bar R,
						\\
						(u_0-v)_+& \hbox{if }|x|\ge \bar R
						\end{array}
						\right.
					\end{equation*} 
		As before, define $\psi_M=k_M\psi$ and multiply the first equation of the system and equation
			$$-\Delta v+\frac{\g}{|x|}v=0$$ by $\psi_M$ (which is a test function for both the equations) and integrate. Comparing, we obtain
			\begin{multline*}
				\irt \n (u_0-v)\n \psi_M\, dx + \irt \frac \g{|x|}(u_0-v)\psi_M\, dx\\
				=\irt  \left(u_0^{p-2}-e(2\o - e\phi_0)\phi_0+ \frac{\g}{|x|} \right)u_0\psi_M\, dx.
			\end{multline*}
		Observe that for any $M\ge \bar R$ it is 
			$$\irt \frac \g{|x|}(u_0-v)\psi_M\, dx= \int_{A_{\bar R}^{2M}}k_M\frac\g{|x|}|(u_0-v)_+|^2\, dx\ge 0$$
		and 
			\begin{multline*}
				\irt \left(u_0^{p-2}-e(2\o - e\phi_0)\phi_0+ \frac{\g}{|x|} \right)u_0\psi_M\, dx\\
				=\int_{A_{\bar R}^{2M}}k_M \left(u_0^{p-2}-e(2\o - e\phi_0)\phi_0+ \frac{\g}{|x|} \right)u_0(u_0-v)_+\, dx\le 0.
			\end{multline*}
		Then 
		\begin{equation*}
			\irt \n (u_0-v)\n \psi_M\, dx\le 0
		\end{equation*}
		and, passing to the limit as $M$ goes to infinity, we have
			\begin{equation*}
				\int_{|x|\ge \bar R} |\n (u_0-v)_+|^2\, dx \le 0,
			\end{equation*}
		and then $u_0\le v$ almost everywhere in $(\bar R,+\infty)$. The contradiction arises since $v$  exponentially decays at infinity (see \cite{AMR})).

	\end{proof}

Finally we conclude with the following

	\begin{proof}[Proof of Theorem \ref{main}]
		Since $u_0\in L^2(\RT)$, by Proposition \ref{pr:dist} and a density argument, 
		we have that for all $(v,w)\in\HT\times\D$
			\begin{align*}
				&\irt(\n u_0\n v +e(2\o-e\phi_0)\phi_0u_0 v-u_0^{p-1} v)\,dx=0,
				\\
				&\irt\n \phi_0\n w\, dx=\irt e(\o-e\phi_0) u_0^2 w\,dx.
			\end{align*}
			We deduce that
				\begin{itemize}
					\item[1.] by uniqueness  $\phi_0=\phi_{u_0}$,
					\item[2.] by ellipticity $u_0\in C^2(\RT)$ and $\phi_{u_0}\in C^2(\RT)$, and equations are satisfied pointwise,
					\item[3.] by the strong maximum principle $u_0>0$ and $\phi_{u_0}>0$,
					\item[4.] by Berestycki - Lions' radial lemma $\phi_{u_0}$ satisfies \eqref{FO},
					\item[5.] since $(u_0,\phi_{u_0})\in\Hr\times\Dr$, the energy \eqref{eq:energy} and the charge \eqref{eq:charge} are finite.
				\end{itemize}
	\end{proof}
	\begin{remark}\label{re:cou}
		Observe that, since $u_0\in L^2(\RT)$ and $\phi_{u_0}\in C^2(\RT)$, the function $\phi_{u_0}$ satisfies \eqref{eq:ode} for $u=u_0$ and then, by direct computations, we deduce that there exist two positive constants $K_1$ and $K_2$ such that for any $r\ge 1$,
			\begin{equation*}
				\frac{K_1}{r}\le \phi_{u_0}(r)\le \frac{K_2}{r}.
			\end{equation*}
			
		By this fact and using the same arguments as those in the proof of Proposition  \ref{pr:crucial} (actually we do not need anymore truncations to apply the comparison principle), we show that $u_0$ decays exponentially at infinity.
		
		We conclude that the majority of standing wave's charge is localized inside a bounded region, in line with the particle-like interpretation. 
	\end{remark}

\end{document}